\documentclass[11pt]{amsart}

\usepackage{amsmath, amsthm, amssymb, amsfonts}
\usepackage{graphics, epsfig}
\usepackage{graphicx}
\usepackage{amscd}
\usepackage{subcaption} 
\usepackage{caption}
\graphicspath{{images/}}
\captionsetup[subfigure]{labelformat=empty}

\usepackage{comment}
\usepackage{array,booktabs,calc}

\usepackage{float}

\usepackage{pb-diagram}

\usepackage{lmodern}
\usepackage{palatino}                       
\usepackage[bigdelims,vvarbb]{newpxmath}    
\usepackage[scaled=0.95]{inconsolata}       
\linespread{1.12}                           
\usepackage[T1]{fontenc}                    

\usepackage{comment}

\usepackage[abs]{overpic}		  

\usepackage{xcolor}
\definecolor{indigo}{rgb}{0.29, 0.0, 0.51}  
\usepackage[colorlinks, urlcolor=indigo, linkcolor=indigo, citecolor=indigo]{hyperref}

\usepackage[hcentering, vcentering, total={5.9in, 8.2in}]{geometry}  

\definecolor{dblue}{RGB}{33, 64, 154}

\theoremstyle{plain}
\newtheorem{theorem}{Theorem}

\newtheorem{lemma}[theorem]{Lemma}

\theoremstyle{definition}

\theoremstyle{remark}
\newtheorem{remark}[theorem]{Remark}

\numberwithin{theorem}{section}

\newcommand{\dfn}[1]{{\em #1}}        
\newcommand{\R}{\mathbb{R}}           
\newcommand{\Q}{\mathbb{Q}}           

\makeatletter
\newcommand*\bigcdot{\mathpalette\bigcdot@{0.6}}
\newcommand*\bigcdot@[2]{\mathbin{\vcenter{\hbox{\scalebox{#2}{$\m@th#1\bullet$}}}}}
\makeatother



\DeclareMathOperator{\Tight}{Tight}
\DeclareMathOperator\tb{tb}                               






\DeclareFontFamily{U} {cmr}{}
\DeclareFontShape{U}{cmr}{m}{n}{
  <-6> cmr5
  <6-7> cmr6
  <7-8> cmr7
  <8-9> cmr8
  <9-10> cmr9
  <10-12> cmr8
  <12-> cmr9}{}
\DeclareSymbolFont{Xcmr} {U} {cmr}{m}{n}
\DeclareMathSymbol{\Phi}{\mathord}{Xcmr}{8}

\begin{document}

\title{Surgeries on the trefoil and symplectic fillings} 

\author{John Etnyre}

\author{Nur Sa\={g}lam}

\address{School of Mathematics \\ Georgia Institute of Technology \\ Atlanta, GA}
\email{etnyre@math.gatech.edu}
\email{nsaglam6@gatech.edu}


\begin{abstract}
In this note we will determine which contact structures on manifolds obtained by certain surgeries on the right handed trefoil are Stein fillable and which are not. This continues a long line of research and shows that there seems to be few underlying patterns to when a contact structure is Stein fillable or not. 
\end{abstract}

\maketitle

\section{Introduction}

There has been a long history of trying to understand the symplectic fillability of contact $3$-manifolds and in particular the distinction between different types of fillability. In this paper we will focus on the distinction between Stein fillability and exact fillability as opposed to strong fillability, see Section~\ref{sympfill} for definitions. From the definition it is clear that a Stein fillable contact structure is exactly fillable, and an exactly fillable contact structure is strongly fillable. Though we do not address these issues here it is also true that a strongly fillable contact structure is weakly fillable and that a weakly fillable contact structure is tight. It is known that none of the reverse implications is true, \cite{Bow12, Eli96, EH02, Ghi05}. 

While there has been some work about the distinctions between the types of fillability on hyperbolic $3$-manifolds \cite{ConwayMin20, KalotiTosun17,LiLiu19}, the original examples showing the types of fillability were distinct were on Seifert fibered spaces (or their connect sums). In addition, we know quite a bit about the types of fillability of small Seifert fibered spaces. Recall these are Seifert fibered spaces with base $S^2$ and three singular fibers, and they have an important integer valued invariant denoted by $e_0$. We have a complete classification of contact structures on small Seifert fibered spaces with $e_0\not=-2, -1$, \cite{GLS06, Wu04}, and all of them are Stein fillable. In fact, any minimal symplectic filling of them is Stein since they are supported by planar open books \cite{Etnyre04b}. So much of the interesting behavior mentioned above occurs on small Seifert fibered spaces with $e_0=-1$. In fact, we can already see such results on small Seifert fibered spaces obtained by Dehn surgery on the right handed trefoil with surgery coefficient less than $5$. For example, in \cite{LiscaStipsicz04}, Lisca and Stipsicz showed that when the surgery coefficient is between $1$ and $4$, one has tight contact structures and none of them are fillable in any sense. And in \cite{Ghi05, GV16} Ghiggini and Van Horn-Morris showed that when the surgery coefficient is $1/n$, all tight contact structures are strongly fillable, and some are not exactly fillable, while others are Stein fillable. From their work and work of Min in \cite{Min22} one might expect a pattern to from as to when contact structures on manifolds obtained by $r\in(0,1)$ surgery on the right handed trefoil are exactly fillable and when they are not. This paper continues this investigation and shows that there are few patterns and understanding which contact structures are Stein fillable and which are not, is a very difficult question.

We now go into detail about the work of Ghiggini and Van Horn-Morris in \cite{GV16}. There they classified contact structures on $-\Sigma(2, 3, 6n+5)$, the manifold obtained from $1/(n+1)$ surgery on the right handed trefoil. They showed that $-\Sigma(2,3,6n+5)$ had exactly $n(n+1)/2$ tight contact structures. They can naturally be arranged in a triangle as in Figure~\ref{triangle}. 
\begin{figure}[htb]

{\small
\begin{overpic}
{figs/triangle}
\put(8, -5){$\xi^1_{5}$}
\put(43, -5){$\xi^1_{4}$}
\put(78, -5){$\xi^1_{3}$}
\put(113, -5){$\xi^1_{2}$}
\put(148, -5){$\xi^1_{1}$}
\put(183, -5){$\xi^1_{0}$}

\put(24, 24){$\xi^2_{4}$}
\put(61, 24){$\xi^2_{3}$}
\put(95, 24){$\xi^2_{2}$}
\put(131, 24){$\xi^2_{1}$}
\put(166, 24){$\xi^2_{0}$}

\put(43, 54){$\xi^3_{3}$}
\put(78, 54){$\xi^3_{2}$}
\put(112, 54){$\xi^3_{1}$}
\put(150, 54){$\xi^3_{0}$}

\put(61, 84){$\xi^4_{2}$}
\put(96, 84){$\xi^4_{1}$}
\put(131, 84){$\xi^4_{0}$}

\put(79, 114){$\xi^5_{1}$}
\put(113, 114){$\xi^5_{0}$}

\put(98, 144){$\xi^6_{0}$}
\end{overpic}}
\caption{Triangle representing tight contact structures on $-\Sigma(2, 3, 41)$.}
\label{triangle}
\end{figure}

More specifically, they break into a group of $n$ structures $\{\xi^1_{n-1}, \ldots, \xi^1_1, \xi^1_0\}$, $n-1$ structures $\{\xi^2_{n-2}.\ldots, \xi^2_0\}$, all the way down to a group with only one structure $\{\xi^n_0\}$. (We are using different notation than is found in \cite{GV16} as this fits better in our discussion below.)  They, moreover, showed that the contact structures in the bottom row were all Stein fillable and the one at the top was not Stein fillable (in fact, not even exactly fillable). They conjectured that all the other rows were not exactly fillable either. It is known that all of these contact structures are strongly fillable. 

Recently, Min \cite{Min22} showed that the contact structures on the interior of the triangle are not exactly fillable; that is, there are $(n-2)(n-3)/2$, for $n\geq 3$, contact structures that are known not to be exactly fillable. In general, we do not know the fillability status of the other $2(n-2)$ contact structures on the right and left edges of the triangle. However, in upcoming work of Min, Tosun, and the first author \cite{EMT}, it will be shown that on $-\Sigma(2, 3, 23)$ all the contact structures, except the one at the top of the triangle are Stein fillable.

Generalizing the story above, we define $S^3_T(r)$ to be the result of $r$-Dehn surgery on the right handed trefoil $T$. In the upcoming work of Min, Tosun, and the first author \cite{EMT}, the contact structures on $S^3_T(r)$ will be classified (and more generally, on manifolds obtained by surgery on other torus knots). The classification for $r\in(0,1/2)$ looks significantly different than for other values of $r$. To state their result let
\[
\Phi(r)=(a_1-1)\cdots(a_n-1)
\]
where $1/r=[a_0,a_1,\ldots, a_n]$.
This will be the number of tight contact structures on a solid torus with meridional slope $r$ and dividing slope $1/n$, where $n$ is the largest integer such that $1/n$ is bigger than $r$. 
\begin{theorem}[Etnyre, Min, and Tosun]\label{class}
For a rational number $r\in [1/(n+1),1/n)$ the $3$-manifold $S^3_T(r)$ has exactly 
 \[\frac{n(n+1)}2\Phi(r)\] 
 tight contact structures up to isotopy. 
\end{theorem}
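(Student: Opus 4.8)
The plan is to classify the tight contact structures on $S^3_T(r)$ by convex surface theory, exploiting that every $S^3_T(r)$ shares the common exterior $M=S^3\setminus\nu(T)$ of the trefoil, which is Seifert fibered over the disk with two singular fibers, of orders $2$ and $3$. Fixing an identification of $\partial M=\partial\nu(T)$ with $\R^2/\Z^2$, performing $r$-surgery glues in a solid torus $V_r$ of meridional slope $r$; for $r\in[1/(n+1),1/n)$ the result is a small Seifert fibered space with $e_0=-1$, the delicate case. First I would make the gluing torus $T=\partial M$ convex with an efficient dividing set, record its slope, and then analyze the two sides separately before gluing.

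For the upper bound I would run the standard convex-decomposition machinery, isolating inside the surgery region a convex torus $T'$ of dividing slope $1/n$ and cutting $S^3_T(r)$ into the solid torus $V'$ bounded by $T'$ (of meridional slope $r$) and the outer piece $W=M\cup(T^2\times I)$ with $\partial W=T'$. On the solid-torus side, the tight extensions over $V'$ with meridional slope $r$ and boundary slope $1/n$ are counted, by Honda's classification of tight contact structures on $S^1\times D^2$, by the continued-fraction product $\Phi(r)=(a_1-1)\cdots(a_n-1)$; this is exactly the count recorded before the theorem statement, and it explains why $1/n$, the smallest unit fraction exceeding $r$, is the relevant boundary slope. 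For the outer piece, observe that $S^3_T(1/(n+1))=-\Sigma(2,3,6n+5)$ is obtained by filling $W$ along $T'$ with a solid torus of meridional slope $1/(n+1)$, which by the $\Phi(1/(n+1))=1$ computation admits a \emph{unique} tight extension; hence the tight structures on $W$ with $\partial W$ of slope $1/n$ are in bijection with the tight structures on $-\Sigma(2,3,6n+5)$, of which there are $n(n+1)/2$ by \cite{GV16}. Multiplying gives the upper bound $\tfrac{n(n+1)}{2}\Phi(r)$.

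For the lower bound I would first construct that many candidate structures, gluing each of the $n(n+1)/2$ tight structures on $W$ to each of the $\Phi(r)$ tight structures on $V'$ and checking that each glued structure is tight; this should follow from realizing the building blocks by Legendrian surgery (the relevant pieces are Stein fillable, cf. \cite{GV16}) together with Honda's gluing results for toric pieces, or from the principle that tightness is not destroyed across a convex torus when the dividing slopes on the two sides agree. I would then show the candidates are pairwise non-isotopic: since the decomposing tori can be taken convex of their prescribed slopes in any tight structure, the restrictions to $W$ and to $V'$ are isotopy invariants, the $V'$-factors are separated by their boundary-parallel (relative Euler class) data, and the $W$-factors are already distinct by \cite{GV16}.

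The main obstacle will be pinning the upper bound to exactly this product, showing there is neither over- nor undercounting at the gluing. Concretely, one must prove that in every tight contact structure on $S^3_T(r)$ the torus $T'$ can be isotoped to be convex of dividing slope exactly $1/n$ with maximal twisting realized, so that no tight structure is missed and none is double-counted under the $T^2\times I$ shuffling characteristic of the $e_0=-1$ case, and that the two factors genuinely decouple. I expect the interaction with the non-uniqueness of the background slope when $e_0=-1$, together with the verification that the boundary case $r=1/(n+1)$ (where $\Phi\equiv1$) recovers precisely the Ghiggini--Van Horn-Morris triangle, to be where the argument requires the most care.
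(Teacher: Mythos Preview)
Note first that the paper does not itself prove this theorem: it is quoted from the forthcoming work \cite{EMT}, and Section~\ref{sec:class} only derives the count from a finer statement (also from \cite{EMT}) that every tight structure on $S^3_T(r)$ is uniquely a gluing $(S^3_T,\xi_k)\cup(S_r,\eta)$ for some $1\le k\le n$ and $\eta\in\Tight(S_r,1/k)$, where the $\xi_k$ are the non-thickenable exteriors of Theorem~\ref{nonthickenable}. Since $|\Tight(S_r,1/k)|=(n-k+1)\Phi(r)$ (Lemma~\ref{countonT2}), summing over $k$ gives the total. Your plan---cut once at slope $1/n$ and factor the count as $|\Tight(W)|\cdot\Phi(r)$, then read off $|\Tight(W)|=\tfrac{n(n+1)}{2}$ from $-\Sigma(2,3,6n+5)$---is a natural reorganization, but the bijection you claim runs in the wrong direction. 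Uniqueness of the tight structure on $S_{1/(n+1)}$ with boundary slope $1/n$ gives an injection from tight structures on $-\Sigma(2,3,6n+5)$ \emph{into} those on $W$ (restrict; the complement is determined), hence $\tfrac{n(n+1)}{2}\le|\Tight(W)|$. For an upper bound you need the reverse, namely that every tight structure on the trefoil exterior with boundary slope $1/n$ extends tightly over the $1/(n+1)$-filling, and nothing in your argument supplies that. What actually bounds $|\Tight(W)|$ from above is the non-thickenable tori result of \cite{ELT12} (Theorem~\ref{nonthickenable} here): it is this, together with the convex-surface analysis in \cite{EMT}, that forces every tight structure on $S^3_T$ with boundary slope $1/n$ to be one of exactly $\tfrac{n(n+1)}{2}$ structures built from some $\xi_k$ with $k\le n$ plus a stack of basic slices. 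You never invoke the non-thickenable tori, and without them neither the bound on $W$ nor the existence of a slope-$1/n$ torus in an arbitrary tight $S^3_T(r)$ (your acknowledged ``main obstacle'') can be established.

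There are also genuine problems with the lower bound. The ``principle that tightness is not destroyed across a convex torus when the dividing slopes on the two sides agree'' is false: gluing two tight pieces along a matching convex torus frequently yields an overtwisted manifold, so this cannot serve as your tightness argument. Nor can Stein fillability be the uniform mechanism, since a central point of this very paper is that many of the $\xi^k_{l,P}$ are \emph{not} Stein (or even exactly) fillable; indeed only $(B,\xi_1)$ among the $(B,\xi_k)$ of \cite{GV16} is Stein fillable. The correct tightness argument, implicit in the paper's setup, is that every candidate arises from negative contact surgery on the Legendrian $L_k\subset(B,\xi_k)$, and since each $(B,\xi_k)$ is \emph{strongly} fillable, so is the result---hence tight. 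Again this rests on Theorem~\ref{nonthickenable}(1), the embedding of $(S^3_T,\xi_k)$ into $(S^3,\xi_{std})$, which is the key input your proposal omits.
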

Notice that, just as for the case of $-\Sigma(2,3,6n+5)$ above, these contact structures can be arranged in a triangle, but each vertex in the triangle corresponds to $\Phi(r)$ contact structures. 
More specifically, we can denote the contact structure $\xi^k_{l,P}$ were $k$ and $l$ are as above, and $P$ is a parameter that takes on $\Phi(r)$ possible states. If $r\in [1/(n+1),1/n)$, then we say a contact structure $\xi^k_{l,P}$ is on the \dfn{interior of the triangle} if $k=2,\dots, n-1$ and $l=1,\ldots, n-k-1$, at the \dfn{top of the triangle} if $k=n$, at \dfn{the base of the triange} if $k=1$, and on the \dfn{vertical sides of the triangle} otherwise (that is if $k=2,\ldots, n-1$ and $l=0$ or $n-k$). 
This will be discussed more thoroughly in Section~\ref{sec:class}. 

One might expect that the fillability of the contact structures on $S^3_T(r)$ would follow a similar pattern to those on $-\Sigma(2,3,6n+5)=S^3_T(1/(n+1))$.  It is easy to see that the ones in the bottom row are Stein fillable and Min's result extends to show that the ones on the interior of the triangle are non-exactly fillable. So one might ask if the ones at the top of the triangle are not exactly fillable. Our first main result says that this is not the case for all $r$.

\begin{theorem}\label{main1}
If $n >3$ and 
\[
r\in \left[ \frac{2n-1}{2n^2}, \frac{2}{2n+1}\right)
\]
then $S^3_{T}(r)$ has 
\[
\frac{n(n+1)}2 \Phi(r)
\]
tight contact structures,
\begin{itemize}
\item  $(2n-1)\Phi(r)$ are Stein fillable (these are the contact structures at the base, top and half the structures at along the vertical sides of the triangle),  
\item $\frac{(n-3)(n-2)}2 \Phi(r)$ are strongly fillable, but not exact, or Stein, fillable (these are the ones in the interior of the triangle), and 
\item $(n-2)\Phi(r)$ that are strongly fillable, but we do not know if they are Stein fillable (these are half of the structures along the vertical sides of the triangel). These later contact structures are Stein fillable if and only if the contact structures at the same place on the triangle for $-\Sigma(2,3, 6n+5)$ are Stein fillable. 
 \end{itemize}

For $n=2$ or $3$ all the contact structures are Stein fillable. 
\end{theorem}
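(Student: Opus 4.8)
The plan is to establish the three fillability claims separately, since each requires a genuinely different technique. The total count of $\frac{n(n+1)}{2}\Phi(r)$ tight contact structures is immediate from Theorem~\ref{class}, as the stated interval $[\frac{2n-1}{2n^2}, \frac{2}{2n+1})$ sits inside $[1/(n+1), 1/n)$; one should first verify this containment by a short continued-fraction computation and record the value of $\Phi(r)$ on this subinterval. I would then organize the proof around the triangle decomposition: base ($k=1$), top ($k=n$), vertical sides ($k=2,\dots,n-1$ with $l=0$ or $l=n-k$), and interior.

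For the \emph{positive} fillability results, the natural approach is to build explicit Stein fillings. The contact structures at the base of the triangle are supported by the obvious open books / Legendrian surgery diagrams inherited from the Stein fillable structures on $S^3$, so I would exhibit a Legendrian surgery description and appeal to Eliashberg's theorem that Legendrian surgery on a Stein fillable contact manifold is Stein fillable. The genuinely new content is the Stein fillability of the structures at the \emph{top} of the triangle together with half of those on the vertical sides — a total of $(2n-1)\Phi(r)$. Here I would try to produce Stein handle diagrams directly, exploiting the extra room that $r \geq \frac{2n-1}{2n^2} > \frac{1}{n+1}$ provides: the interval is chosen precisely so that $1/r$ has a continued-fraction expansion permitting a different surgery presentation than the $1/(n+1)$ case, and I expect the bound $n>3$ to be exactly what makes these diagrams realizable as Legendrian surgeries. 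Matching the Stein fillings to the correct contact structures $\xi^k_{l,P}$ in the classification — i.e.\ computing their invariants (the dividing-curve/continued-fraction data, or the homotopy invariants $d_3$ and the underlying $\spinc$ structure) and checking they land at the claimed positions — is where I expect the main bookkeeping obstacle to lie.

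For the \emph{interior} structures, I would not reprove non-fillability from scratch but instead invoke Min's result \cite{Min22}, which shows the interior contact structures on $-\Sigma(2,3,6n+5)$ are not exactly fillable, and argue that Min's obstruction transfers to $S^3_T(r)$ for $r$ in our interval. The cleanest route is to show that each interior $\xi^k_{l,P}$ for our $r$ is obtained from the corresponding interior structure on $-\Sigma(2,3,6n+5)$ by attaching a symplectic (Legendrian) cobordism coming from the solid-torus parameter $P$, so that an exact filling of $\xi^k_{l,P}$ would yield an exact filling of the $-\Sigma(2,3,6n+5)$ structure, contradicting \cite{Min22}. Strong fillability of all these structures is already known (stated in the excerpt), so only the exclusion of exact/Stein fillings needs argument.

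Finally, for the $(n-2)\Phi(r)$ structures on the other half of the vertical sides, I would prove the stated \emph{equivalence} with the $-\Sigma(2,3,6n+5)$ case rather than resolve fillability outright. The idea is to set up an explicit correspondence — again via a Legendrian/Weinstein cobordism induced by the solid-torus gluing encoded by $P$ — showing that a Stein filling of $\xi^k_{l,P}$ on $S^3_T(r)$ exists if and only if one exists for the contact structure at the same $(k,l)$ position on $-\Sigma(2,3,6n+5)$; the cobordism is invertible at the level of Stein structures, so fillings push and pull across it. The main obstacle throughout is the identification step: carefully matching the concretely constructed (or cobordism-transported) contact structures with the abstractly classified $\xi^k_{l,P}$, which requires a firm handle on the convex-surface/continued-fraction coordinates from the classification in Section~\ref{sec:class}. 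The small cases $n=2,3$ should be handled by direct inspection, exhibiting Stein fillings of all structures, and I expect these to follow from the same base-of-triangle and top-of-triangle constructions once the triangle degenerates.
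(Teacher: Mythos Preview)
Your proposal has two genuine gaps, one in each direction of the argument.

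\textbf{Non-fillability.} Your plan for the interior (and for one direction of the vertical-side equivalence) is to transport Min's obstruction via a Legendrian/Weinstein cobordism from $-\Sigma(2,3,6n+5)=S^3_T(1/(n+1))$ to $S^3_T(r)$. The cobordism does exist and goes in exactly this direction (since $r>1/(n+1)$, the extra surgeries are negative contact surgeries), but Weinstein cobordisms transfer fillability only \emph{forward}: a filling of the concave end extends, a filling of the convex end does not restrict. So an exact filling of $\xi^k_{l,P}$ on $S^3_T(r)$ would not, by this mechanism, produce one on $-\Sigma(2,3,6n+5)$. Your assertion that ``the cobordism is invertible at the level of Stein structures'' is simply false in general. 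The paper instead uses the Christian--Menke splitting theorem (Theorem~\ref{cm}): the interior structures, and the ``wrong-half'' vertical-side structures, contain a mixed torus, and splitting an exact filling along it yields an exact filling of $(B,\xi_k)$ (the torus bundle $S^3_T(0)$ with its $k$th tight structure) or of the corresponding structure on $-\Sigma(2,3,6n+5)$, giving the contradiction or the reduction. You need this decomposition theorem; a cobordism argument cannot replace it.

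\textbf{Stein fillability at the top.} ``Produce Stein handle diagrams directly, exploiting the extra room'' is not a method. The mechanism the paper uses is specific: $\frac{2n-1}{2n^2}$-surgery on $T$ is the same as Legendrian surgery on (two parallel copies of) the $(n,1)$-cable of $T$, realized as Legendrian divides on the boundary of the non-thickenable solid tori $S_n^\pm$ of Theorem~\ref{nonthickenable}; Lemma~\ref{contactcable} then identifies the resulting contact structure as $\xi^n_{0,P_\pm}$. The interval endpoint $\frac{2n-1}{2n^2}$ is not an artifact of a continued-fraction expansion but is precisely $(pq-1)/p^2$ for $(p,q)=(n,1)$, cf.\ Lemma~\ref{surgeryoncable}, and the upper endpoint $\frac{2}{2n+1}$ is the adjacent Farey vertex, so that the remaining $r$ are reached by further Legendrian surgeries inside the resulting solid torus. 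The same cable trick with $S_k^\pm$ for $2\le k\le n-1$ gives the Stein-fillable half of the vertical sides; the $n=3$ case uses the $(2,1)$-cable and the $-\Sigma(2,3,23)$ input from \cite{EMT}. Without this cable-surgery idea you have no construction for the top-of-triangle fillings, which is the whole point of the theorem.
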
 

We now observe that the top of the triangle can admit both Stein fillable and non-Stein fillable contact structures. 

\begin{theorem}\label{3n1cable}
For 
\[
r\in \left[\frac{9}{25}, \frac{4}{11}\right)
\]
the manifold $S^3_T(r)$ has $3\Phi(r)$ tight contact structures of which $(2\Phi(r)+2)$ are Stein fillable and $(\Phi(r)-2)$ are strongly symplectically fillable, but not exactly symplectically fillable. 
\end{theorem}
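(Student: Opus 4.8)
The plan is to first pin down the combinatorics and then sort the structures into the three fillability classes. Since $r\in[9/25,4/11)$ lies in $[1/3,1/2)$ (note this is disjoint from, and below, the interval $[3/8,2/5)$ covered for $n=2$ by Theorem~\ref{main1}), Theorem~\ref{class} applies with $n=2$, so $S^3_T(r)$ carries exactly $3\Phi(r)$ tight contact structures arranged in a triangle with a base row of $2\Phi(r)$ structures $\{\xi^1_{0,P},\xi^1_{1,P}\}$ and a single top vertex carrying the $\Phi(r)$ structures $\{\xi^2_{0,P}\}$, where $P$ runs over the $\Phi(r)$ tight contact structures on the glued-in solid torus. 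I would realize each of these via contact $(\pm1)$-surgery on a Legendrian link built from a maximal-$\tb$ right-handed trefoil together with the Legendrian chain prescribed by the (negative) continued fraction of $1/r$; the stabilization data records both the triangle coordinate $(k,l)$ and the parameter $P$, and this is the bookkeeping that the classification of \cite{EMT} provides.

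Next I would establish Stein fillability of the $2\Phi(r)+2$ structures. Any representative drawable as all admissible (contact $(-1)$) surgeries on a Legendrian link in $(S^3,\xi_{\mathrm{std}})$ bounds a Stein domain, and the $2\Phi(r)$ base structures admit such diagrams directly. The essential new point is that exactly two of the $\Phi(r)$ top structures also admit all-negative descriptions, coming from Legendrian surgery on a suitable cable of $T$ with the two extremal rotation numbers (equivalently, the two extremal tight structures on the gluing solid torus). I would write these two diagrams explicitly and check that they sit at the top vertex $k=2$, $l=0$.

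To show the remaining $\Phi(r)-2$ top structures are not exactly fillable I would reduce to \cite{GV16}. The core $c$ of the glued solid torus is a Legendrian knot in $(S^3_T(r),\xi^2_{0,P})$, and a contact $(-1)$-surgery on $c$ (changing the meridional slope from $r$ down to $1/3$, and preserving the triangle coordinate $k=2$) produces the apex contact structure on $-\Sigma(2,3,17)=S^3_T(1/3)$, which Ghiggini--Van Horn-Morris proved is not exactly fillable. This surgery yields a Stein, hence exact, cobordism from $(S^3_T(r),\xi^2_{0,P})$ (concave end) to the apex (convex end); gluing a hypothetical exact filling of $\xi^2_{0,P}$ to it would produce an exact filling of the apex, a contradiction. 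Finally, all $3\Phi(r)$ structures are strongly fillable, which follows by the methods of \cite{GV16, Min22}, capping the solid-torus piece onto a strong filling of $-\Sigma(2,3,17)$.

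The main obstacle is the trichotomy at the top vertex, and in particular the exact count of two. I expect the delicate work to be showing that for precisely $\Phi(r)-2$ of the parameters $P$ the core $c$ admits a Legendrian representative with the surgery admissible and landing on the non-fillable apex, while for the remaining two values the structure is instead the one realized by the all-negative cabled diagram of the second step. Matching these two descriptions to the same two values of $P$, and ruling out \emph{exact} fillability rather than merely Stein fillability, is where the argument must be most careful.
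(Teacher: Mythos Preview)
Your approach to Stein fillability---the $2\Phi(r)$ base structures via Legendrian surgery diagrams, and the two extremal top structures via Legendrian surgery on a cable of $T$---matches the paper's. Concretely, for $r=9/25$ the paper uses the $(5,2)$-cable, realized as a Legendrian divide on a slope-$2/5$ convex torus inside the non-thickenable solid tori $S_2^\pm$; Lemma~\ref{contactcable} then identifies the results as $\xi^2_{0,P_\pm}$ where $P_\pm$ is the all-$\pm$ decorated path from $9/25$ to $1/2$. The extension to $r\in(9/25,4/11)$ is by further Legendrian surgery inside the resulting solid torus, using the Farey edge between $9/25$ and $4/11$.

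Your non-exact-fillability argument, however, has the cobordism pointing the wrong way. In $(S^3_T(r),\xi^2_{0,P})$ the Legendrian core $c$ of the surgery torus has maximal contact framing equal to the first vertex clockwise of $r$ on the path to $1/2$ (namely $4/11$ when $r=9/25$). Consequently any negative contact surgery on $c$ or on its stabilizations produces a new meridional slope in the interval $(r,4/11)\subset(1/3,1/2)$, never $1/3$. Said differently, the Stein cobordism naturally relating these manifolds has $(-\Sigma(2,3,17),\xi^2_0)$ as its \emph{concave} end and $(S^3_T(r),\xi^2_{0,P})$ as its \emph{convex} end---indeed this is exactly the cobordism underlying your (correct) strong-fillability claim, and it only transfers fillability in the direction you don't want. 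A sanity check: your proposed argument nowhere uses that $P$ has mixed signs, so it would equally apply to the Stein-fillable structures $\xi^2_{0,P_\pm}$.

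The paper obstructs exact fillability by a different mechanism that does detect the mixed-sign hypothesis. For $r=9/25$ the three signed edges $4/11\to 3/8\to 2/5\to 1/2$ of the path form a continued fraction block; when $P$ carries both signs one may shuffle to produce a mixed torus at slope $3/8$, whose only exceptional slope is $1/3$. The Christian--Menke splitting theorem (Theorem~\ref{cm}) then decomposes any hypothetical exact filling of $(S^3_T(r),\xi^2_{0,P})$ into pieces, one of which is an exact filling of $(-\Sigma(2,3,17),\xi^2_0)$, contradicting Ghiggini's theorem. This splitting result is the essential input; there is no elementary cobordism substitute going in the needed direction.
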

We note that $r$ in this theorem is in $(1/3,1/2)$ and the proof will show that the contact structure that are not Stein fillable are all at the top of the triangle, but two at the top are Stein fillable.

\begin{theorem}\label{1329}
For 
\[
r\in \left[\frac{13}{49}, \frac{4}{15}\right)
\]
the manifold $S^3_{T}(r)$ has $6\Phi(r)$ tight contact structures of which $(5\Phi(r)+2)$ are Stein fillable and $(\Phi(r)-2)$ are strongly symplectically fillable, but not exactly symplectically fillable. 
\end{theorem}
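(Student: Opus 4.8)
The plan is to adapt to $n=3$ the argument one would use for Theorem~\ref{3n1cable}. Since $13/49$ and $4/15$ both lie in $[1/4,1/3)$, Theorem~\ref{class} applies with $n=3$ and produces exactly $\frac{3\cdot4}{2}\Phi(r)=6\Phi(r)$ tight contact structures, arranged in a triangle whose base is the three vertices $\xi^1_0,\xi^1_1,\xi^1_2$, whose vertical sides are the two vertices $\xi^2_0,\xi^2_1$, and whose top is the single vertex $\xi^3_0$, each vertex carrying $\Phi(r)$ structures $\xi^k_{l,P}$. The interior of the triangle is empty for $n=3$, so only the base, the vertical sides, and the top need to be analyzed, and the entire content of the theorem is a statement about the top.

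First I would dispose of the $5\Phi(r)$ structures on the base and vertical sides. The base structures $\xi^1_{l,P}$ are Stein fillable by the standard Legendrian-surgery construction already noted for the bottom row of the triangle. For the two vertical-side vertices I would argue, via the same cabling correspondence underlying the last bullet of Theorem~\ref{main1}, that their Stein fillability is equivalent to that of the contact structures in the same position on $-\Sigma(2,3,23)=S^3_T(1/4)$; since by \cite{EMT} every contact structure on $-\Sigma(2,3,23)$ except the top one is Stein fillable, all $2\Phi(r)$ vertical-side structures here are Stein fillable. This gives $5\Phi(r)$ Stein fillable structures so far.

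The novelty is entirely at the top vertex $\xi^3_0$, where $S^3_T(r)$ does \emph{not} behave like $-\Sigma(2,3,23)$ (whose top is not even exactly fillable). Here I would use that the chosen slopes are cabling slopes, so that two distinguished states $P$ of the surgery solid torus are realized by Legendrian cables of the right handed trefoil with maximal Thurston--Bennequin invariant. A short Legendrian-surgery diagram built from these cables exhibits explicit Stein fillings of the two contact structures $\xi^3_{0,P}$, exactly as for the two exceptional top structures of Theorem~\ref{3n1cable}. I would then use convex surface theory to read off the dividing data on the surgery torus, both to locate these two structures within the classification of Theorem~\ref{class} and to check that the two fillings are genuinely non-isotopic. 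This contributes the extra $+2$ Stein fillable structures.

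It remains to obstruct exact fillability of the other $\Phi(r)-2$ states at the top. Strong fillability of all of these is known, just as for $-\Sigma(2,3,6n+5)$, so only exact fillings must be ruled out. For this I would adapt Min's obstruction from \cite{Min22} together with the Ghiggini--Van Horn-Morris non-fillability of the top of $-\Sigma(2,3,23)$: an exact filling of such a $\xi^3_{0,P}$, capped off by the symplectic cobordism coming from the cabling/solid-torus description, would yield an exact filling of the top contact structure on $-\Sigma(2,3,23)$, which does not exist. The step I expect to be the main obstacle is precisely this transfer: controlling the interaction of an arbitrary exact filling with the cabling cobordism well enough to reduce to the Brieskorn case, while simultaneously verifying that exactly the two cabling states escape the obstruction and the remaining $\Phi(r)-2$ do not. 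Once this is done, the split $6\Phi(r)=(5\Phi(r)+2)+(\Phi(r)-2)$ gives the statement.
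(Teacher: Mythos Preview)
Your overall strategy matches the paper's proof closely: the same triangle decomposition with $n=3$, the same treatment of the base row, the same appeal to the $(2,1)$-cable argument and \cite{EMT} for the second row, and the same idea of producing two Stein fillable structures at the top via a Legendrian cable. The specific cable you want at the top is the $(7,2)$-cable of $T$, since $13/49=(7\cdot 2-1)/7^2$; Legendrian surgery on the slope-$2/7$ Legendrian divides inside the non-thickenable tori $S^\pm_3$ of Theorem~\ref{nonthickenable} yields $\xi^3_{0,P_\pm}$, and further Legendrian surgeries on the core of the resulting solid torus $S_{13/49}$ (which has dividing slope $4/15$) handle the whole range $r\in(13/49,4/15)$, exactly as in the last paragraph of the proof of Theorem~\ref{3n1cable}.

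The one substantive divergence is your description of the obstruction for the remaining $\Phi(r)-2$ top structures. You phrase it as ``capping an exact filling by a symplectic cobordism coming from the cabling/solid-torus description'' to produce an exact filling of the top of $-\Sigma(2,3,23)$, and you flag compatibility of an arbitrary filling with this cobordism as the main obstacle. That is not how Min's argument runs, and such a capping would not in general preserve exactness. The actual mechanism---the content of \cite{Min22}, and what the paper uses here---is the Christian--Menke splitting theorem (Theorem~\ref{cm}): since the three signed edges $4/15\to 3/11\to 2/7\to 1/3$ in the path describing $\xi^3_{0,P}$ form a single continued fraction block, any decoration $P$ containing both signs can be shuffled so that a mixed torus appears inside the surgery solid torus, with sole exceptional slope $1/4$. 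A hypothetical exact filling then \emph{splits} along a properly embedded solid torus, one piece being an exact filling of $S^3_T(1/4)=-\Sigma(2,3,23)$ with its top contact structure $\xi^3_0$, which Ghiggini--Van~Horn-Morris rules out. There is no ``interaction with a cobordism'' to control; once the mixed torus is located, the splitting theorem performs the reduction directly. With this correction, your argument is exactly the paper's.
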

We note that $r$ in this theorem is in $(1/4,1/3)$ and the proof will show that all the contact structures are Stein fillable, except for $(\Phi(r)-2)$ at the top of the triangle.  

We also note that the last two theorems give a complete classification of the types of fillings that a tight contact structure can have. Apart from the result of Min, Tosun, and the first author on $-\Sigma(2,3, 23)$ mentioned above, this is the first such classification of types of fillings of $r$ surgery on the right handed trefoil when $r\in(0,1/2)$, which is the region where the most interesting things can happen. 

The main techniques used to prove the above theorems are surgeries on cables and decomposing fillings using mixed tori. More specifically, in \cite{BaldwinEtnyre13}, Baldwin and the first author introduced the notion that surgery on a Legendrian cable of a knot can sometimes be interpreted as surgery on the underlying knot too, and in  \cite{EMT}, Min, Tosun, and the first author showed that this sometimes produce Stein fillings when one could not produce such fillings using the original knot. This is how we construct most of our Stein fillings in our theorems above. To rule out Stein, or exact, fillings we use work of Christian and Menke \cite{CM22}. In that work they show that given an exact filling of a contact manifold with a special torus in it, one can ``split" the filling into fillings of other contact manifold, which in some cases do not admit exactly fillings. 

We assume the reader is familiar with basic results in contact and symplectic geometry. In Section~\ref{surgery}, we will review various results, and fix notation about Dehn surgery, surgery on cables, the classification of contact structures on solid tori, and contact surgery. In particular, we prove a results from \cite{BaldwinEtnyre13, EMT} about Legendrian surgery on cables of knots that is the key to the theorems above. In Section~\ref{sec:class}, we discuss details about, Theorem~\ref{class}, the classification of tight contact structures on surgeries on the right handed trefoil. Then in Section~\ref{sympfillsec}, we recall the notions of symplectic fillability that are of interest to us in this paper and recall the theorem of Christian and Menke \cite{CM22}. In the last two sections of the paper we prove Theorem~\ref{main1} and Theorems~\ref{3n1cable} and~\ref{1329}, respectively. 

\noindent
{\bf Acknowledgments:} The first author was partially supported by National Science Foundation grants DMS-1906414 and DMS-2203312. The second author was partially supported by T\"{U}B\.{I}TAK, the Scientific and Technological Research Council of TURKEY. Both authors were also partially supported by the Georgia Institute of Technology's Elaine M. Hubbard Distinguished Faculty Award.

\section{Contact structures on solid tori and surgery on cabled knots.}\label{surgery}
In this section we will discuss smooth surgeries and contact surgeries. Specifically we set up the notation for smooth surgeries on knots and discuss surgeries on cabled knots in Section~\ref{DehnSurgery}. In Section~\ref{contsurg} we discuss contact surgery and for this discussion we need to review the classification of tight contact structures on solid tori which is done in Section~\ref{classtori} after discussing the Farey graph in Section~\ref{sec:farey}

\subsection{Dehn surgery and surgery on cables}\label{DehnSurgery}

Let $K$ be a knot in $S^{3}$ and $\nu(K)\cong S^1\times S^2$ be the tubular neighborhood of $K$ in $S^3$.  We denote the complement of the interior of $\nu(K)$ by $S^3_K$. There is a unique curve $\mu$ on $\partial \nu(K)$ that bounds a disk in $\nu(K)$. Any other embedded curve $\lambda$ in $\partial \nu(K)$ that intersects $\mu$ transversely one time is called a longitude for $K$ and determines a framing. We will use $(\lambda, \mu)$ as a basis for the homology $H_1(\partial \nu(K))$, so every element in the homology group can be expressed as a pair of integers $(a,b)$. It is well-known that the homology class can be represented by an embedded curve on the torus if and only if $a$ and $b$ are relatively prime. For relatively prime pairs $(a,b)$ we can represent them as an element $b/a$ in the rational numbers union infinity. So an element of $\Q\cup\{\infty\}$ specifies a unique simple closed curve on the torus $\partial \nu(K)$. We call $b/a$ the \dfn{slope} of the embedded curve.

When $K$ is in $S^3$ one has the Seifert framing, that is a curve $\lambda$ on $\partial \nu(K)$ that is null-homologous in $S^3_K$. If not otherwise specified, we will always use this Seifert framing when discussing Dehn surgery. 

We denote by $S^{3}_K\left( p/q \right)$ the manifold obtained from $S^3_K$ by gluing a solid torus $S^{1}\times D^{2}$ via a diffeomorphism $\varphi: \partial \left(S^{1}\times D^{2}\right) \rightarrow  \partial S^{3}_K$ that send the curve $\{pt\}\times \partial D^2$ to the $p/q$ curve on $\partial S^3_K$. We say $S^3_K(p/q)$ is obtained from $S^3$ by \dfn{$p/q$ Dehn surgery on $K$}.

We will give a different interpretation of Dehn surgery that will be useful for our constructions. Consider $T^2\times [0,1]$. We can foliate $T^2\times \{0\}$ by curves of slope $r\in \Q\cup\{\infty\}$ (here we have fixed a basis for $H_1(T^2)$ so that we can relate simple closed curves on $T^2$ to rational numbers (union infinity) as above). Let $S_r$ be the quotient space of $T^2\times [0,1]$ with each leaf of the slope $r$ foliation on $T^2\times \{0\}$ collapsed to a point. It is clear that $S_r$ is a solid torus and the meridian in the solid torus has slope $r$ (with respect to to the given basis). Now we can think of $S^3$ as $S^3_K\cup \nu(K)$ where $\nu(K)$ is $S_\infty$ in the $\lambda, \mu$ basis. So $S^3_K(r)$ is simply $S^3_K\cup S_r$. So instead of gluing a fixed $S^1\times D^2$ to $S^3_K$ via a diffeomorphism of the boundary, we just replace $S_\infty$ with $S_r$ (where the boundaries are glued by the identity map). 

Expanding on the construction above, we can also define $S^r$ to be the quotient space of $T^2\times [0,1]$ with each leaf of the slope $r$ foliation on $T^2\times \{1\}$ collapsed to a point.  This is also clearly a solid torus. We say $S^r$ is a solid torus with \dfn{upper meridian $r$} and when necessary, for emphasis, we say $S_r$ is a solid torus with \dfn{lower meridian $r$}.

Given a knot $K$ and a tubular neighborhood $\nu(K)$, then we define the \dfn{(p,q)-cable of $K$} to be the knot, denoted $K_{p,q}$, given by a slope $q/p$ curve on $\partial \nu(K)$. Our results will crucially  rely on the following result.
\begin{lemma}\label{surgeryoncable}
The following manifolds are diffeomorphic
\[
S^3_K\left(\frac{pq \pm 1}{p^2}\right)\cong S^3_{K_{p,q}}(\lambda_T\pm 1)
\]
where the surgery coefficient $\lambda_T\pm 1$ means one more or one less than the framing $\lambda_T$ of $K_{p,q}$ coming from the torus that contains $K_{p,q}$.
\end{lemma}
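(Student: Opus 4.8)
The plan is to localize the surgery inside the solid torus $\nu(K)$ and recognize the remainder of the construction as an ordinary Dehn surgery on $K$. First I would isotope the cable $K_{p,q}$ off of $\partial\nu(K)$ into the interior of $\nu(K)$, placing it as a slope $q/p$ curve on a concentric torus $T'$; since parallel tori induce the same framing, the torus framing $\lambda_T$ is unaffected by this isotopy. Writing $Y=\nu(K)\setminus\interior\nu(K_{p,q})$ for the resulting \emph{cable space}, we obtain a decomposition
\[
S^3_{K_{p,q}}=S^3_K\cup_{T_1} Y,\qquad T_1=\partial\nu(K),
\]
whose remaining torus boundary is $T_2=\partial\nu(K_{p,q})$, the torus on which the surgery is performed. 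The $\lambda_T\pm1$ surgery amounts to gluing a solid torus $U$ to $T_2$, so it suffices to prove two things: (i) $W:=Y\cup_{T_2}U$ is again a solid torus, and (ii) the meridian of $W$, read on $T_1=\partial W$, has slope $(pq\pm1)/p^2$. Granting these, $S^3_{K_{p,q}}(\lambda_T\pm1)=S^3_K\cup_{T_1}W$ is precisely $S^3_K$ reglued with a solid torus of meridional slope $(pq\pm1)/p^2$, that is $S^3_K\big((pq\pm1)/p^2\big)$.

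For (i) I would use the Seifert fibered structure of the cable space. The cabling annulus between the $(p,q)$ curve $c_1=p\lambda+q\mu$ on $T_1$ and its push-off $\lambda_T$ on $T_2$ is vertical, so $Y$ is Seifert fibered over an annulus with a single exceptional fiber (of order $p$), and the regular fiber meets $T_2$ exactly in the torus-framing curve $\lambda_T$. Hence the surgery slope $\lambda_T\pm1$ intersects the fiber once, and filling $T_2$ along it caps the annular base off to a disk without creating a new exceptional fiber, producing a Seifert fibered space over the disk with a single exceptional fiber, namely a solid torus. This recognition is the main obstacle: the rest is bookkeeping, whereas here one must correctly identify the fiber slope on $T_2$ with $\lambda_T$ and invoke the classification of Seifert fibered fillings (equivalently, give a hands-on argument that the exceptional fiber becomes the core of $W$).

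For (ii) I would compute in $H_1(Y)\cong\Z^2$ using two explicit surfaces. The cabling annulus gives the relation $p\lambda+q\mu=\lambda_T$, and the meridian disk of $\nu(K)$, punctured at the $p=\operatorname{lk}(K,K_{p,q})$ points where it meets the cable, gives $\mu=p\,\mu_V$, where $\mu_V$ is the meridian of $\nu(K_{p,q})$. One checks that $\{[\lambda],[\mu_V]\}$ is a basis of $H_1(Y)$ (via Mayer--Vietoris on $\nu(K)=Y\cup\nu(K_{p,q})$), and that filling along $\gamma=\lambda_T\pm\mu_V$ imposes the single relation $p\lambda+(pq\pm1)\mu_V=0$ in $H_1(W)$. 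The meridian of $W$ is the primitive class on $T_1$ that dies in $H_1(W)$; solving $b\lambda+ap\,\mu_V\in\langle p\lambda+(pq\pm1)\mu_V\rangle$ for a primitive $a\mu+b\lambda$ forces $(a,b)=(pq\pm1,p^2)$, i.e. the meridian is $(pq\pm1)\mu+p^2\lambda$, of slope $(pq\pm1)/p^2$ in the paper's convention (here $\gcd(pq\pm1,p^2)=1$ since $\gcd(pq\pm1,p)=1$). Combined with (i) this yields the stated diffeomorphism. As a consistency check, in the Seifert framing the torus framing is $\lambda_T=pq$, so the surgery coefficient on $K_{p,q}$ is the integer $pq\pm1$, matching the numerator above.
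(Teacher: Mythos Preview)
Your argument is correct, but it is a genuinely different proof from the paper's. The paper uses the standard fact that $\pm 1$ surgery on a curve $\gamma$ lying on a surface $\Sigma$ (relative to the surface framing) is the same as cutting along $\Sigma$ and regluing by the Dehn twist $\tau_\gamma^{\mp}$; applying this with $\Sigma=\partial\nu(K)$ and $\gamma=K_{p,q}$, one simply writes down the Dehn--twist matrix in the $(\lambda,\mu)$ basis and reads off that the meridian is sent to a curve of slope $(pq\pm1)/p^2$. You instead decompose the cable exterior as $S^3_K\cup Y$, use the Seifert fibration of the cable space $Y$ to see that the distance--one filling along $\lambda_T\pm\mu_V$ returns a solid torus, and then compute the new meridian by a homology calculation using the cabling annulus and the punctured meridian disk.

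Both approaches are short; the trade--off is that the paper's Dehn--twist argument immediately produces the full regluing matrix recorded in Remark~\ref{CableDiffeo}, which is then used in Lemma~\ref{contactcable} to track how decorated paths transform under the cable surgery. Your argument yields the meridional slope, which is all the lemma asks for, but if you want to recover the matrix you would need one more line: compute the image of the longitude of $W$ on $T_1$ (equivalently, find the primitive class on $T_1$ that pairs to $1$ with your meridian and generates $H_1(W)$), which your basis $\{[\lambda],[\mu_V]\}$ and relation $p\lambda+(pq\pm1)\mu_V=0$ make straightforward. Conversely, your approach makes the Seifert structure explicit, which is conceptually nice and explains \emph{why} the filled cable space is again a solid torus rather than just verifying it via a matrix computation.
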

This is well-known, but we give the simple proof as we will need the details in our construction. 

\begin{proof}
Recall that if $\Sigma$ is a surface in a $3$-manifold $M$ and $\gamma$ is a curve on $\Sigma$, then cutting $M$ along $\Sigma$ and re-gluing by a $\tau_\gamma^\pm$ (where $\tau_\gamma$ is a right handed Dehn twist about $\gamma$) results in the manifold $M_\gamma(\lambda_\Sigma \mp 1)$, where $\lambda_\Sigma$ is the framing of $\gamma$ induced by $\Sigma$, \cite{Rolfsen76}. Thus $S^3_{K_{p,q}}(\lambda_T\pm 1)$ is diffeomorphic to the result of cutting $S^3$ along $\partial \nu(K)$ and re-gluing the solid torus by $\tau_{K_{p.q}}^\mp$. One may easily compute that the meridian of $\nu(K)$ maps to $\pm p^2\lambda + (1\pm pq)\mu$ under this diffeomorphism. So we are removing $\nu(K)$ from $S^3$ and gluing it back in so that the meridian goes to the $(pq\pm 1)/p^2$ curve, in other words we are doing $(pq\pm 1)/p^2$ Dehn surgery on $K$. 
\end{proof}
\begin{remark}\label{CableDiffeo}
Below it will be helpful to know the diffeomorphism used in the proof above. That re-gluing map is 
\[
\begin{pmatrix}
1\mp pq& \pm p^2\\ \mp q^2& 1\pm pq
\end{pmatrix}.
\]
\end{remark}

%
%

\subsection{The Farey graph}\label{sec:farey}
To discuss contact structures on thicken tori and solid tori, we will need a convenient way to describe curves on a torus. We will do this with the Farey graph. The Farey graph lives in the unit disk in $\R^2$ and we consider the hyperbolic metric on the interior of the disk. To construct the Farey graph we first label the point $(0,1)$ by $0=\frac 01$ and $(0,-1)$ by $\infty=\frac 10$. We now define the Farey sum of to fractions as follows: $\frac ab \oplus \frac cd= \frac{a+c}{b+d}$. Now considering points on the boundary of the unit disk with $x$-coordinate positive, if a point is half way between two labeled points $r$ and $s$ , then label it with the Farey sum of the two points $r\oplus s$ and then connect the new point to the two old points with hyperbolic geodesics. Iterate this process until all positive rational numbers are labeled. We now do the same thing for points on the boundary of the disk with negative $x$-coordinate except now we take $\infty=\frac{-1}0$. See Figure~\ref{fareygraph}.
\begin{figure}[htb]

{\small
\begin{overpic}
{figs/farey}
\put(142, 2){\color{dblue}$\infty$}
\put(145, 306){\color{dblue}$0$}
\put(-15, 154){\color{dblue}$-1$}
\put(297, 154){\color{dblue}$1$}
\put(29, 43){\color{dblue}$-2$}
\put(254, 43){\color{dblue}$2$}
\put(19, 261){\color{dblue}$-1/2$}
\put(254, 261){\color{dblue}$1/2$}
\put(72, 296){\color{dblue}$-1/3$}
\put(200, 296){\color{dblue}$1/3$}
\put(-13, 212){\color{dblue}$-2/3$}
\put(285, 212){\color{dblue}$2/3$}
\put(-14, 96){\color{dblue}$-3/2$}
\put(286, 96){\color{dblue}$3/2$}
\put(76, 10){\color{dblue}$-3$}
\put(203, 10){\color{dblue}$3$}
\end{overpic}}
\caption{The Farey graph.}
\label{fareygraph}
\end{figure}

We now discuss some notation that will be used below. Given two rational numbers $r_0$ and $r_1$ we will denote by the interval $(r_0,r_1)$ all the rational numbers that are strictly clockwise of $r_0$ and strictly anti-clockwise of $r_1$. We can similarly define $[r_0,r_1]$ by removing ``strictly" from the previous definition. We will also write $r_0\oplus n r_1$ to be the result of Farey summing $r_1$, $n$ times to $r_0$.

\subsection{Contact structures on thickened and solid tori}\label{classtori}
Here we review the classification of tight contact structures on thickened tori and solid torus \cite{Giroux00, Hon001}, following \cite{Hon001}. We assume the reader is familiar with the basic terminology about convex surfaces as given in \cite{EtnyreHonda01b, Hon001}, though we note that our slope conventions are the reciprocal of their's, this is to better agree with topologist's slope conventions.  

When considering the manifold $T^2\times [0,1]$ we will denote $T^2\times \{i\}$ by $T_i$. Given a tight contact structure on $T^2\times [0,1]$ with convex boundary and $T_i$ having dividing slope $s_i$, we say the contact structure is \dfn{minimally twisting} if any convex torus in $T^2\times [0,1]$ that is isotopic to a boundary torus, has dividing slope clockwise of $s_0$ and anti-clockwise of $s_1$ in the Farey graph. We denote by $\Tight_m(T^2\times [0,1], s_0, s_1)$ the set of isotopy classes of tight, minimally twisting contact structures on $T^2\times [0,1]$ with convex boundary having two dividing curves each and dividing slopes $s_0$ on $T_0$ and $s_1$ on $T_1$. Honda \cite{Hon001} and Giroux \cite{Giroux00} classified the contact structures in $\Tight_m(T^2\times [0,1], s_0, s_1)$, but we will only need to consider the case when $s_0$ and $s_1$ are connected by an edge in the Farey graph. In this case $\Tight_m(T^2\times [0,1], s_0, s_1)$ has exactly two elements. These elements are called \dfn{basic slices} and are determined by a sign on the edge (this corresponds to the sign of the relative Euler class of the contact structure, but we will not need this), so we call one a positive basic slice and the other a negative basic slice. 


Recall in the previous subsection we defined the solid torus $S_r$ with lower meridian $r$ and the solid torus $S^r$ with upper meridian $r$. We would now like to give the classification of tight contact structures on these tori. We let $\Tight(S_r,s)$ be the isotopy classes of tight contact structures on $S_r$ with convex boundary having two dividing curves of slope $s$. Notice that when there is an edge between $s$ and $r$ in the Farey graph, then one of the dividing curves on $\partial S_r$ is a longitude for $S_r$ and a theorem of Kanda \cite{Kanda97} says that there is a unique tight contact structure on $S_r$ with these boundary conditions. 

More generally, let $P$ be a minimal path in the Farey graph starting at $r$ and moving clockwise to $s$. An assignment of signs to each edge in $P$, except the first edge, is called a decorated path. A subset of a path $P$ of length $n$ is called a continued fraction block, if, after a change of coordinates on $T^2$, it is the path $-n, -n+1, \ldots, -1, 0$. We say two decorations of a path $P$ differ by \dfn{shuffling in continued fraction blocks} if they have the same number of $+$ signs (and hence $-$ signs) in each continued fraction block. Shuffling in continued fraction blocks gives an equivalence relation on decorated paths. 

\begin{theorem}\label{TorusClass}
The elements in $\Tight(S_r,s)$ are in one-to-one correspondence with equivalence classes of decorated paths in the Farey graph from $r$ clockwise to $s$. 
\end{theorem}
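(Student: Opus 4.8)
The plan is to reduce the classification to the already-established building blocks, namely the basic slices of Section~\ref{classtori} and Kanda's uniqueness theorem for solid tori whose meridian is Farey-adjacent to the boundary slope. The overall strategy is a factorization argument: given a tight contact structure on $S_r$ with boundary dividing slope $s$, I would decompose $S_r$ as a union of a thickened torus $T^2\times[0,1]$ and a solid torus $S_r'$ whose meridian $r$ is connected by an edge to the innermost slope of the path. Concretely, fix a minimal path $P = (r = s_0, s_1, \ldots, s_m = s)$ clockwise in the Farey graph from $r$ to $s$, where consecutive $s_i$ are Farey-adjacent. Using convex surface theory, I would find nested convex tori realizing the intermediate slopes $s_i$, so that the region between the torus of slope $s_1$ and $\partial S_r$ is a collection of basic slices, and the innermost piece is a solid torus $S_{r}$ with boundary slope $s_1$ (one Farey step from the meridian $r$), which by Kanda carries a unique tight structure.

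The key steps, in order, would be as follows. First I would show existence: build a tight contact structure from each decorated path by stacking the basic slices prescribed by the signs on edges $s_1 s_2, \ldots, s_{m-1}s_m$ onto the unique Kanda piece, verifying the result is tight (this uses the gluing/perturbation results of Honda). Second, I would prove surjectivity of the correspondence: starting from an arbitrary tight structure on $S_r$ with boundary slope $s$, locate a convex torus parallel to the boundary whose slope is $s_1$, peel off the outer thickened torus $\Tight_m(T^2\times[0,1], s_1, s)$ (which factors into basic slices along $P$ by Honda's classification), and observe the remaining solid-torus piece is rigid by Kanda. This realizes the structure as coming from \emph{some} decorated path. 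Third, and most delicately, I would establish injectivity up to the stated equivalence: two decorated paths give isotopic contact structures if and only if they differ by shuffling within continued fraction blocks. The ``if'' direction comes from the fact that within a continued fraction block the basic slices commute (they can be reordered by isotopy, changing only which edge carries which sign while preserving the total count), exactly as in Honda's analysis of $T^2\times[0,1]$; the ``only if'' direction requires showing distinct equivalence classes are genuinely non-isotopic, which I would detect via a relative invariant such as the relative Euler class or by a counting argument matching the number of tight structures.

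The main obstacle I anticipate is the injectivity step, specifically the interaction between the shuffling phenomenon inside continued fraction blocks and the special role of the first edge of the path (the edge $s_0 s_1$ abutting the meridian). The definition deliberately excludes the first edge from carrying a sign, reflecting that the Kanda piece is unique and contributes no binary choice; I must check carefully that a continued fraction block containing this first edge behaves correctly under shuffling, so that no spurious distinction or collapse occurs at the boundary between the rigid innermost solid torus and the first genuine basic slice. In other words, the subtlety is that the convex-surface decomposition is not canonical precisely within continued fraction blocks, and the bijection must be stated modulo exactly the reorderings that convex isotopy permits, no more and no fewer. I would handle this by closely following Honda's thickened-torus argument and then grafting on the Kanda solid torus, taking care that the equivalence relation on decorations matches the indeterminacy in the choice of intermediate convex tori.
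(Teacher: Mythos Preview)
The paper does not actually prove this theorem: it is stated as a review of the classification due to Honda \cite{Hon001} and Giroux \cite{Giroux00}, with no argument given beyond the attribution. So there is no ``paper's own proof'' to compare your proposal against. What you have written is a reasonable outline of Honda's original argument (factor into basic slices along a minimal Farey path, use Kanda's uniqueness for the innermost piece, and identify the shuffling ambiguity within continued fraction blocks via the relative Euler class), and is appropriate as a sketch of the literature result; but for the purposes of this paper the theorem is simply quoted, not proved.
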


We note here that a non-minimal decorated path from $r$ to $s$ will also define a contact structure on $S_r$. We say a non-minimal path can be shortened if two adjacent edges can be removed and replaced with a single edge. We say a decorated path can be consistently shortened if the two edges being replaced have the same sign, and the shortened path will then have the new edge with the sign of the two removed edges. If one of the edges has no sign (that is the one with one vertex the meridional slope) then we can shorten and the new edge will not have a sign either. A contact structure on a solid torus given by a non-minimal path is tight if and only if it can be consistently shortened to a minimal path. We note one can reverse this process and take a minimal decorated path and split some of the edges in to several edges and then new edges will have the same sign as the original edge. This gives the same contact structure but described with a longer path. We in particular note that given any slope $t\in(r,s)$ we can lengthen the path describing a tight contact structure on $S_r$ to go through $t$. 

We have the same discussion for $\Tight(S^r,s)$ except we will be considering minimal paths from $r$ anti-clockwise to $s$ and again the first edge will not be decorated. 

An important part of the classification that will be useful for us below is that given a contact structure in $\Tight(S_r, s)$ and $t\in(r,s)$, there is a convex torus $T_t$ in $S_r$ parallel to the boundary such that $T_t$ is convex with two dividing curves of slope $t$. See \cite{Hon001}.  

\subsection{Contact Surgery}\label{contsurg}
Given a Legendrian knot $L$ in a contact manifold $(M,\xi)$, let $N$ be a standard neighborhood of $L$. Namely, $N$ is a solid torus neighborhood of $L$ on which the contact structure is tight and the boundary of $N$ is convex with two dividing curves whose slope is given by the contact framing on $L$. In the notation of the previous section, when $L$ is null-homologous, using standard coordinates on $\partial N$, we see that the contact structure on $N$ is the unique one in $\Tight(S_\infty, \tb(L))$. We will denote $N$ by $S_\infty$ to emphasize that the meridian has slope $\infty$. (Recall our notation for solid tori from Section~\ref{DehnSurgery}.) It will be useful to note below that if $N_\pm$ is a standard neighborhood of the $\pm$ stabilization of $L$ inside of $N$, then $N\setminus N_\pm$ will be a $\pm$ basic slice. 

Contact $(r)$-surgery on $L$ is the result of removing $S_\infty$ from $M$ and replacing it with $S_{\tb(L)+r}$ on which we take any contact structure in $\Tight(S_{\tb(L)+r}, \tb(L))$. We note that this does not produce a unique contact structure on the manifold $M_L(\tb(L)+r)$, but when $r=1/n$ it does produce a unique contact structure. When $r=-1$ we call this surgery operation \dfn{Legendrian surgery on $L$}. In \cite{DingGeiges04}, it was shown that for $r<0$, contact $(r)$-surgery on $L$ can be achieved by a sequence of Legendrian surgeries on copies of stabilization of $L$ (and all of these can be done inside the standard neighborhood of $L$). Since it is known that Legendrian surgery preserves tightness \cite{Wan15} and all forms of fillability \cite{EtnyreHonda02b}, the same is true for contact $(r)$-surgery for all $r<0$. 

It will be useful to reinterpret the above result using coordinates on the neighborhood of the torus that are not ``standard". Given a Legendrian knot $L$ it will always have a standard neighborhood, but if different coordinates are chosen on the boundary of the neighborhood, then the meridional slope could be any slope $r$. So we will denote this neighborhood by $S_r$. The contact structure gives $L$ a framing and that determines the slope of the dividing curves on $\partial S_r$. Since the framing corresponds to a longitude in the torus (that is a curve that intersects the meridian one time), that slope $s$ must have an edge to $r$ in the Farey graph. (Notice we could change coordinates on the neighborhood so that the meridional slope was $\infty$ and the contact framing was an integer.)  Now Legendrian surgery on $L$ is the result of removing $S_r$ from $M$ and gluing in $S_{r\oplus s}$ on which there is a unique tight structure. More generally, for any $t\in (r,s)$ contact $(t)$-surgery on $L$ corresponds to a sequence of Legendrian surgeries as discussed above. 

We end this section by considering Legendrian surgeries on certain cables. 
\begin{lemma}\label{contactcable}
Let given a contact structure $\xi\in \Tight(S_{s/r}, t)$ on the solid torus $S_{s/r}$. Let $q/p$ be any slope in $(s/r,t)$. Recall that $\xi$ is determined by a decorated minimal path from $s/r$ clockwise to $t$ in the Farey graph.  As discussed after Theorem~\ref{TorusClass}, we can lengthen the path so that it goes through $q/p$. Denote $P_1$ the path from $s/r$ clockwise to $q/p$ and $P_2$ the path from $q/p$ clockwise to $t$. Let $L$ be a Legendrian divide on the convex torus $T$ of slope $q/p$. The result of Legendrian surgery on $L$ is the tight contact structure in $\Tight(S_{(q^2r+(1-pq)s)/((1+pq)r-p^2s)},t)$ given by the union of the paths $P_1`\cup P_2$, where $P_1`$ is obtained from $P_1$ by applying the diffeomorphism of $T^2$ given in Remark~\ref{CableDiffeo}.
\end{lemma}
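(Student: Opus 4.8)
The plan is to realize the Legendrian surgery on $L$ as a cut-and-reglue operation along the convex torus $T$, exactly as in the proof of Lemma~\ref{surgeryoncable}, and then to track how the two contact pieces into which $T$ divides $S_{s/r}$ are affected. First I would use the remark following Theorem~\ref{TorusClass} to cut $S_{s/r}$ along $T$ into an inner solid torus $A$ (a copy of $S_{s/r}$ with boundary dividing slope $q/p$, carrying the contact structure recorded by $P_1$) and an outer thickened torus $B$ (dividing slopes $q/p$ and $t$, carrying the contact structure recorded by $P_2$). Because $L$ is a Legendrian divide on $T$, its contact framing $\tb(L)$ agrees with the surface framing $\lambda_T$ induced by $T$. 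Hence Legendrian surgery on $L$, which is $\tb(L)-1=\lambda_T-1$ surgery, is exactly the Rolfsen move recalled in the proof of Lemma~\ref{surgeryoncable}: cut $S_{s/r}$ along $T$ and reglue by the right-handed Dehn twist $\tau_L$.

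Next I would observe that this regluing is supported in a collar of $T$, so the contact structures on $A$ and on $B$ are unchanged as abstract contact manifolds; only the identification of $\partial A$ with $B$ is altered by $\tau_L$. On homology, $\tau_L$ acts by the matrix from Remark~\ref{CableDiffeo} with the sign choice corresponding to the framing $\lambda_T-1$, namely $M=\left(\begin{smallmatrix}1+pq&-p^2\\ q^2&1-pq\end{smallmatrix}\right)$. Absorbing $\tau_L$ into a reparametrization of $A$, the inner solid torus becomes $S_m$ with meridian $m=M(s/r)=(q^2r+(1-pq)s)/((1+pq)r-p^2s)$, while its boundary slope is unchanged since $(p,q)$ is fixed by $M$ (that is, $M$ fixes the slope $q/p$). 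Consequently the decorated path $P_1$ is carried vertex by vertex to $P_1'=M(P_1)$, and gluing the reparametrized $A$ to the unchanged $B$ along slope $q/p$ produces $S_m$ with the contact structure recorded by $P_1'\cup P_2$. Tightness of the result is automatic, since Legendrian surgery preserves tightness \cite{Wan15}, and the outer boundary is untouched so the result genuinely lies in $\Tight(S_m,t)$.

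The step I expect to require the most care is the second one: matching the contact-geometric Legendrian surgery with the purely topological cut-and-reglue of Lemma~\ref{surgeryoncable}. One must check that a sufficiently thin standard neighborhood of $L$ can be taken inside a collar of $T$, so that the surgery is truly supported near $T$ and leaves the contact structures on $A$ and $B$ intact, and that the right-handed Dehn twist coming from the $\lambda_T-1$ framing is the one realizing $M$ on $H_1(T)$ rather than its inverse. Granting this identification, the remaining claims—that $M$ fixes $q/p$, sends $s/r$ to the stated slope $m$, and carries $P_1$ to $P_1'$ while preserving the edge decorations—are direct computations with the Farey graph and the classification in Theorem~\ref{TorusClass}.
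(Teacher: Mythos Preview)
Your approach is essentially the paper's: split along $T$ into an inner solid torus carrying $P_1$ and an outer thickened torus carrying $P_2$, realize Legendrian surgery on the Legendrian divide as the cut-and-reglue by $\tau_L$, and observe that only the gluing of the inner piece changes by the matrix $M$ of Remark~\ref{CableDiffeo}. Two small differences are worth noting. First, for tightness you invoke Wand's theorem, which as stated applies to closed manifolds; the paper instead embeds the tight solid torus into a tight lens space and does the surgery there, which both yields tightness and gives the ambient manifold needed for the next point. Second, the step you flag as delicate---that the surgery really is supported in a collar of $T$ and leaves the contact pieces unchanged---is handled in the paper by taking a $[-1,1]$-invariant neighborhood $N$ of $T$, observing that after surgery the dividing slopes on $\partial N$ are still $q/p$ (since $M$ fixes $q/p$), and then arguing that the post-surgery contact structure on $N$ must be minimally twisting (else the ambient lens space would be overtwisted), hence again $[-1,1]$-invariant. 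This is exactly the verification you anticipated, and with it your outline goes through.
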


\begin{proof}
Notice that we can take a neighborhood $N=T^2\times [-1,1]$ of $T$ so that $T$ is $T^2\times \{0\}$ and the contact structure is invariant in the $[-1,1]$ direction. The complement of the interior of $N$ consists of two pieces, a solid torus $S$ and a thickened torus $A$. Also note that the contact structure on $S$ is given by $P_1$. Any tight contact structure on a solid torus embeds in a tight contact structure on some lens space, and so Legendrian surgery on $L$ can be thought to take place in this tight lens space. Hence the contact structure we obtained on $S_{s/r}$ after Legendrian surgery on $L$ is tight. In particular, the contact structure on $N$ is tight after surgery on $L$. Notice that the dividing curves on both boundary components of $N$ still have slope $q/p$ (since the diffeomorphism in Remark~\ref{CableDiffeo} fixes the slope $q/p$). Moreover, the contact structure on $N$ after surgery must be minimally twisting since if not, the contact structure on the lens space would be overtwisted. Thus, we see that the contact structure on $N$ after surgery on $L$ is still an $[-1,1]$-invariant contact structure on $N$. The surgery on $L$ does not affect the contact structure on $A$ and it does not affect the contact structure on $S$ however $S$ is re-glued, as discussed in the proof of Lemma~\ref{surgeryoncable}, by the diffeomorphisms of it boundary given by the  matrix in Remark~\ref{CableDiffeo}. From this we see the tight contact structure on $S$ is given by $P_1'$ and the lemma follows. 
\end{proof}

\section{Tight contact structures and surgeries on the trefoil}\label{sec:class}

In the upcoming work \cite{EMT}, Min, Tosun and the first author will classify tight contact structures on manifolds obtained by surgery on torus knots in $S^3$. Here we will discuss this classification for surgeries on the right handed trefoil $K$ with surgery coefficient in $(0,1)$. To do this, we must first recall work of LaFountain, Tosun, and the first author, \cite{ELT12}. To state that result we recall that $S^3_T$ denotes the complement of a neighborhood of the trefoil $T$. 

\begin{theorem}\label{nonthickenable}
For all $n\geq 1$ there exist a unique tight contact structure $\xi_{n}$ on $S^3_{T}$ with convex boundary with two dividing curves of slope $\frac{1}{n}$ such that 
\begin{enumerate}
\item $(S^3_T,\xi_{n})$ embedded in tight $(S^{3},\xi_{std})$, and 
\item any convex torus in $\left(S^3_{T}, \xi_{n}\right)$ parallel to the boundary has dividing slope $\frac{1}{n}$.
\end{enumerate}
If $n>1$ then the solid tori $S_n^\pm$ with the contact structure given by the path in the Farey graph from $\infty$ clockwise to $1/n$ with a $\pm$ on the edge from $0$ to $1/n$ can be glued to $(S^3_T,\xi_{n})$ to get $(S^{3},\xi_{std})$ (that is there are two ways of embedding $(S^3_T,\xi_{n})$ in $(S^{3},\xi_{std})$). For $n=1$ there is a unique way of embedding $(S_T^3,\xi_1)$ whose complement is the solid torus $S_1$ with contact structure given by the path in the Farey graph from $\infty$ clockwise to $1$. 
\end{theorem}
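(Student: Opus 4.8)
The plan is to study the complement $S^3_T$ through its embeddings into the unique tight contact sphere. By Eliashberg's theorem $(S^3,\xi_{std})$ is the only tight contact structure on $S^3$, so condition~(1) says precisely that $\xi_n$ is the restriction of $\xi_{std}$ to the complement of some tight solid-torus neighborhood $\nu(T)=S^3\setminus S^3_T$ of the right-handed trefoil. The theorem then becomes a matching problem across $\bd\nu(T)$: classify the tight solid tori $\nu(T)$ with convex boundary of slope $1/n$ (these are governed by Theorem~\ref{TorusClass}) and the tight structures on $S^3_T$ compatible with them and with the non-thickening condition~(2). To organize the complement I would use that $T$ is the $(2,3)$-cable of an unknot $U$; writing $\Sigma=\bd\nu(U)$ for the Heegaard torus carrying $T$ as a curve of slope $3/2$, the regular fiber on $\bd S^3_T$ has slope $pq=6$ and $S^3_T$ is Seifert fibered over the disk with two singular fibers of orders $2$ and $3$.

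First I would establish existence together with the two embeddings. The maximal Thurston--Bennequin right trefoil has $\tb=1$, and its standard neighborhood is a solid torus with boundary slope $1$, which is the case $n=1$. For general $n$ I would exhibit inside $(S^3,\xi_{std})$ a boundary-parallel convex torus of slope $1/n$ bounding a neighborhood of $T$ whose induced tight contact structure is, via Theorem~\ref{TorusClass}, the one determined by the path $\infty\to 0\to 1/n$ in the Farey graph with a sign $\pm$ on the edge $0\to 1/n$ (the first edge $\infty\to 0$ being undecorated since it meets the meridian). Such tori are produced by convex surface theory starting from the maximal Legendrian trefoil, and the two choices of sign on the last basic slice give the two solid tori $S_n^{\pm}$; declaring $\xi_n:=\xi_{std}|_{S^3_T}$ and checking by edge-rounding that gluing $S_n^{\pm}$ back recovers the tight $S^3$ yields the two claimed embeddings.

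For uniqueness and rigidity I would run convex surface theory directly in the complement. Making a boundary-parallel convex torus and the cabling annulus $A=\Sigma\setminus\nu(T)$ (whose boundary has the fiber slope $6$) convex, cut $S^3_T$ along $A$; the two resulting pieces are solid tori whose tight structures are again enumerated by Theorem~\ref{TorusClass}, and reassembling by edge-rounding expresses each candidate $\xi_n$ as a decorated path in the Farey graph. Condition~(2), that \emph{every} boundary-parallel convex torus have dividing slope exactly $1/n$, forbids any nontrivial basic-slice factor that would move the slope, and this rigidity is what collapses the list of decorated paths to the single structure glued from $S_n^{\pm}$, giving uniqueness of $\xi_n$.

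The main obstacle is precisely this non-thickening analysis: one must show that for the constructed $\xi_n$ no bypass exists along any boundary-parallel torus that would simplify the dividing slope, whereas such a bypass does exist at every slope other than $1/n$. This is a bypass-attachment computation inside the Seifert piece, controlled by the singular fibers of orders $2$ and $3$ together with the fiber slope $6$, and it is the source of the distinguished role of the slopes $1/n$. Granting it, the dichotomy between $n=1$ and $n>1$ is immediate from the solid-torus classification: for $n=1$ the slope $1$ is a longitude joined to the meridian $\infty$ by an edge of the Farey graph, so the final edge is adjacent to the meridian, can be consistently shortened to the single edge $\infty\to 1$, and by Kanda's theorem the complementary solid torus is unique, giving a unique embedding; for $n>1$ the decorated edge $0\to 1/n$ is not adjacent to the meridian and cannot be consistently shortened, so it carries a genuine sign and produces the two distinct neighborhoods $S_n^{+}$ and $S_n^{-}$.
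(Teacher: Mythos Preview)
This theorem is not proved in the paper; it is quoted from \cite{ELT12} as input for the classification in Section~\ref{sec:class}. There is therefore no in-paper proof to compare against, so I will comment on your outline on its own terms.

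Your overall architecture---cut $S^3_T$ along the cabling annulus $A=\Sigma\setminus\nu(T)$ into two solid tori, classify the pieces via Theorem~\ref{TorusClass}, and run a bypass analysis for condition~(2)---is correct and is essentially how \cite{ELT12} proceeds. The $n=1$ versus $n>1$ dichotomy via the Farey graph is also right.

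The genuine gap is in your existence step. You write that the slope-$1/n$ tori are ``produced by convex surface theory starting from the maximal Legendrian trefoil,'' but this cannot yield the non-thickenable complement $\xi_n$ for $n\geq 2$. Any convex torus of slope $1/n$ found \emph{inside} the standard neighborhood $N_1$ of the max-$\tb$ Legendrian bounds a solid torus $V\subset N_1$, and then $S^3\setminus V$ contains the boundary-parallel torus $\partial N_1$ of slope $1$, immediately violating~(2). The alternative---thickening $N_1$ outward to a torus of slope $1/n$---is exactly what must be ruled out in order to prove~(2) for $\xi_1$, so it cannot be assumed. In \cite{ELT12} the non-thickenable solid tori $S_n^\pm$ are constructed not from Legendrian representatives of $T$ but from the Seifert/cabling picture you set up: one makes the Heegaard torus $\Sigma$ convex with a carefully chosen dividing slope and builds the neighborhood of the $(2,3)$-curve directly from that, so that existence and non-thickening come out of the same annulus decomposition rather than as separate steps. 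Your sketch has the right ingredients but deploys the cabling structure only for uniqueness, when it is already needed for existence.
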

We can now state the classification of contact structures on $S^3_T(r)$ for $r\in(0,1)$. 
\begin{theorem}
Given $r\in (0,1)$, there is a positive integer $n$ with $r\in \left[\frac1{n+1},\frac 1n\right)$ such that 
\begin{enumerate}
\item For any $k\leq n$ and contact structure $\xi\in\Tight(S_r,1/k)$ the result of gluing $(S^3_T,\xi_{k})$ to $(S_r, \xi)$ will be a tight contact structure on $S^3_T(r)$.
\item Any tight contact structure on $S^3_T(r)$ will come from such a gluing.
\item Two contact structures on $S^3_T(r)$ will be isotopic if and only if they come from gluing the same contact structures on $S^3_T$ and $S_r$. 
\end{enumerate}
\end{theorem}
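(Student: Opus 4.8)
The plan is to classify the tight contact structures on $S^3_T(r)=S^3_T\cup S_r$ by cutting along the interface torus $T=\partial S^3_T=\partial S_r$, making $T$ convex, and reducing the problem to the two classifications already available: the classification of non-thickenable tight structures on the trefoil complement (Theorem~\ref{nonthickenable}, together with the full classification in \cite{ELT12}) and the classification of tight structures on a solid torus (Theorem~\ref{TorusClass}). The three parts of the statement then become the three properties of a correspondence: part (1) is well-definedness (each gluing is tight), part (2) is surjectivity (every tight structure decomposes this way), and part (3) is injectivity (the pair of glued pieces is a complete isotopy invariant).

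For parts (2) and the count, I would run a maximal-thickening argument. Given a tight $\xi$ on $S^3_T(r)$, after making $T$ convex I enlarge the trefoil-complement side by isotoping $T$ into $S_r$, realizing convex parallel tori of successive dividing slopes, until the trefoil complement is non-thickenable. By the classification of \cite{ELT12}, whose non-thickenable pieces are recorded in Theorem~\ref{nonthickenable}, the only non-thickenable tight structures on $S^3_T$ are the $\xi_k$, carried by boundary slope $1/k$; hence the thickening terminates at some $(S^3_T,\xi_k)$. The complementary region has unchanged core and meridian, so it is again $S_r$, now with boundary dividing slope $1/k$, and tightness of $\xi$ makes its restriction an element of $\Tight(S_r,1/k)$. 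Since $\Tight(S_r,1/k)$ is nonempty only when $1/k$ is reached by a short clockwise path from $r$, i.e.\ $1/k>r\ge 1/(n+1)$, this forces $k\le n$, which gives part (2). Summing the sizes of $\Tight(S_r,1/k)$ over $k=1,\dots,n$ via Theorem~\ref{TorusClass} is then meant to reproduce the count $\tfrac{n(n+1)}2\Phi(r)$ of Theorem~\ref{class}.

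For part (1) I would show each gluing $(S^3_T,\xi_k)\cup(S_r,\xi)$ is tight, and for part (3) that distinct gluing data give non-isotopic structures. Injectivity follows once the maximal thickening is seen to be canonical up to contact isotopy — two maximal non-thickenable copies of $(S^3_T,\xi_k)$ inside isotopic ambient structures are isotopic, using uniqueness of $\xi_k$ and standard convex-surface arguments following \cite{Hon001} — so the pair $(\xi_k,\ \xi|_{S_r})$ is an invariant of $\xi$; the shuffling ambiguity of decorated paths is automatically absorbed because we record the \emph{contact structure} on $S_r$, not a chosen path. Tightness of each gluing is subtler: a distinguished subfamily (for instance the base of the triangle and the structures built by Legendrian surgery on a Legendrian cable via Lemma~\ref{surgeryoncable} and Lemma~\ref{contactcable}) is tight because it is fillable, but fillability is \emph{not} available for all of them, so one cannot argue tightness uniformly by realizing the gluing as a negative contact surgery. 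Instead I would establish tightness directly by a convex-surface (state-traversal) gluing argument: because $(S^3_T,\xi_k)$ is non-thickenable and $(S_r,\xi)$ is tight with matching boundary slope $1/k$, any convex torus parallel to $T$ can be normalized to slope $1/k$ on the trefoil side and to a path-slope on the solid-torus side, and one checks no such normalization produces an overtwisted disk.

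The main obstacle I anticipate is exactly this tightness verification for the non-fillable gluings, precisely because the convenient route through fillability is closed off; getting a clean gluing/convex-surface criterion that certifies tightness of all $\tfrac{n(n+1)}2\Phi(r)$ candidates, while simultaneously distinguishing them (by relative Euler class and position in the triangle), is where the real work lies. A secondary structural obstacle is the input from \cite{ELT12} that maximal thickening of the trefoil complement is well-defined and always lands on one of the $\xi_k$: one must rule out tight structures on $S^3_T$ that are non-thickenable at slopes other than $1/k$, and confirm that no two distinct gluing data yield isotopic contact structures beyond the shuffling already accounted for.
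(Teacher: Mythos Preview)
The paper does not actually prove this theorem. It is stated in Section~\ref{sec:class} as a result quoted from the upcoming work \cite{EMT} of Etnyre, Min, and Tosun; the surrounding text only unpacks the statement (introducing the labeling $\xi^k_{l,P}$ and proving the counting Lemma~\ref{countonT2}) and then uses the classification as a black box in the fillability arguments of Sections~5 and~6. So there is no ``paper's own proof'' to compare your proposal against.

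That said, your outline is the expected architecture for such a classification: make the interface torus convex, thicken the trefoil-complement side until it is non-thickenable, invoke \cite{ELT12} to identify it as some $(S^3_T,\xi_k)$, and then classify the remaining solid torus via Theorem~\ref{TorusClass}. You have also correctly located the genuine difficulty. Parts~(2) and~(3) are largely formal once the non-thickenable classification of \cite{ELT12} is in hand, but part~(1)---tightness of every gluing---cannot be handled by fillability (since the whole point of the paper is that many of these structures are \emph{not} exactly fillable), and your sketch of a state-traversal argument is only a gesture, not a proof. In practice, tightness in this setting is typically established either by a Heegaard Floer computation showing the contact invariant is nonzero (as Ghiggini and Van Horn-Morris did for $-\Sigma(2,3,6n+5)$ in \cite{GV16}) or by a careful convex-decomposition gluing theorem; your proposal does not commit to either and would need substantial work to become a proof. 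Likewise, for part~(3) you would need to verify that the maximal non-thickenable piece is unique up to isotopy inside the glued manifold, which requires more than uniqueness of $\xi_k$ on $S^3_T$ alone---one must control how two such pieces sit relative to each other.
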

We now discuss a convenient way to denote the tight contact structures on $S^3_T(r)$. Consider a contact structure constructed from gluing $(S^3_T,\xi_{k})$ to a contact structure in $\Tight(S_r,1/k)$. The latter is determined by a path in the Farey graph from $r$ clockwise to $1/k$. This path consists of a portion $P$ from $r$ to $1/n$ and then the continued fraction block $1/n, 1/(n-1), \ldots, 1/k$. There are $n-k$ edges in the latter part of the path and so there are $n-k+1$ possibilities for the, equivalence class of, ways to put signs on the path; in particular, this is determined by the number of $-$ signs in the continued fraction block. So we can denote the contact structure on $S^3_K(r)$ by $\xi^k_{l,P}$, where $l\in \{0,\ldots, n-k\}$ denotes the number of $-$ signs last continued fraction block and $P$ is a decorated path from $r$ to $1/n$. 
\begin{lemma}\label{countonT2}
If $n$ is the largest integer such that $1/n>r$, then the number of contact structures in $\Tight(S_r,1/n)$ is given by
\[
\Phi(r)=(a_1-1)\cdots(a_n-1)
\]
where $1/r=[a_0,a_1,\ldots, a_n]$.
\end{lemma}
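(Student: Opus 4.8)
The plan is to apply the classification of Theorem~\ref{TorusClass} and then translate the resulting count of decorated paths into the continued fraction expansion of $1/r$.

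First I would set up the count. Because $r\in[1/(n+1),1/n)$ and $1/(n+1)=0\oplus(1/n)$, the number $n$ is the largest integer with $1/n>r$, and the minimal path $P$ from $r$ clockwise to $1/n$ lies entirely in the portion of the Farey graph subdividing the edge from $1/(n+1)$ to $1/n$. By Theorem~\ref{TorusClass}, $|\Tight(S_r,1/n)|$ is the number of decorations of $P$ up to shuffling in continued fraction blocks, where the first edge of $P$ (the one touching the meridian $r$) is undecorated. If $P$ breaks into continued fraction blocks with $\ell_1,\dots,\ell_t$ edges, the first block containing the undecorated edge, then shuffling remembers only the number of $+$ signs in each block, so
\[
|\Tight(S_r,1/n)|=\ell_1\prod_{i=2}^{t}(\ell_i+1),
\]
the leading factor being $\ell_1$ rather than $\ell_1+1$ precisely because one edge of the first block carries no sign.

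Next I would match this with the negative continued fraction
\[
\frac1r=[a_0,a_1,\dots,a_m]=a_0-\cfrac{1}{a_1-\cfrac{1}{\ddots-\cfrac{1}{a_m}}},\qquad a_i\ge 2 .
\]
Since $1/r\in(n,n+1]$ one has $a_0=n+1$; in the extreme case $r=1/(n+1)$ the expansion is $[n+1]$, the path $P$ is the single undecorated edge from $1/(n+1)$ to $1/n$, and both sides equal the empty product $1$, giving the base case. For the inductive step I would read the block decomposition of $P$ off the partial quotients: travelling along $P$ from $r$, the determinant of the starting vertex against the current vertex grows by exactly one across each edge of a block and jumps by more at a ``turn'', and the lengths of the maximal no-turn runs --- the continued fraction blocks --- are dictated by the successive $a_i$. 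Carrying this out (for instance by an induction that removes the last partial quotient, deleting $a_m$ when $a_m=2$ and decrementing it when $a_m\ge 3$, and checking the effect on $P$) turns the block product into $\prod_{j\ge 1}(a_j-1)$, as claimed.

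The step I expect to be the real obstacle is this last matching, because the correspondence between partial quotients and block lengths is not term by term: a partial quotient $a_j=2$ contributes the factor $a_j-1=1$ and is \emph{absorbed}, so it creates no new block and shifts the indexing between the $\ell_i$ and the $a_j$, while the undecorated first edge shaves the first block's contribution from $\ell_1+1$ down to $\ell_1$. This is the standard Hirzebruch--Jung bookkeeping relating negative continued fractions to the turn sequence of a Farey geodesic, and making the indices agree in every boundary case is the only delicate point; should the direct induction become unwieldy, one can instead invoke the count already contained in Honda's classification \cite{Hon001} and simply verify that, with meridian $r$ and boundary slope $1/n$, it is organized by the partial quotients of $1/r$ exactly as stated.
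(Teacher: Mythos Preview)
Your approach is correct in outline but takes a different route from the paper. You work directly with the minimal path from $r$ clockwise to $1/n$, derive the block product $\ell_1\prod_{i\ge 2}(\ell_i+1)$, and then propose to match the block lengths $\ell_i$ against the partial quotients $a_j$ of $1/r$ by an induction on the continued fraction. The paper instead sidesteps this bookkeeping with a symmetry: it applies the orientation-preserving diffeomorphism of $T^2\times[0,1]$ given by $t\mapsto 1-t$ together with the coordinate swap on $T^2$, which identifies $S_r$ with the solid torus $S^{1/r}$ having \emph{upper} meridian $1/r$, and sends dividing slope $1/n$ to $n$. The count $|\Tight(S^{1/r},n)|$ is then read off from the standard path $1/r\to n$ in the Farey graph, where the continued-fraction-to-block correspondence is the classical one (documented in \cite{Hon001,EtnyreRoy21}); since the first integer on that path is $a_0-1=n$, one simply drops the factor $(a_0-1)$ from the full count $(a_0-1)\cdots(a_m-1)$ for $\Tight(S^{1/r},1)$. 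What the paper's approach buys is that the ``delicate'' matching you flag---absorbing the $a_j=2$ terms and aligning the undecorated first edge---never has to be carried out in the nonstandard interval $[1/(n+1),1/n)$; it is exported wholesale from the integer setting via the diffeomorphism. Your approach, by contrast, is self-contained and avoids introducing the auxiliary tori $S^a$, but as written it remains a sketch: the inductive step (removing or decrementing $a_m$ and tracking the effect on $P$) is asserted rather than executed, and that is exactly the step where errors creep in. If you pursue your route, the cleanest way to finish is to apply the same coordinate swap first and then quote Honda, which is essentially what the paper does.
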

\begin{proof}
Notice that $1/r$ is larger than $1$. We claim that the vertices in the shortest path in the Farey graph going from $1/r$ anti-clockwise to $1$ is obtained by $[a_0,a_1,\ldots, a_n]$, $[a_0, a_1, \ldots, a_n-1]$, and one continues to subtract $1$ from the last entry in the continued fraction, with the convention that $[a_0,\ldots, a_k, 1]$ is the same as $[a_0, \ldots, a_k-1]$, until one reaches $1$. Moreover, the continued fraction blocks correspond to the vertices are described by a continued fraction of a fixed length. The same algorithm was given in Section~2.3 of \cite{EtnyreRoy21} (and is well-know) for paths from a rational number less than $-1$ clockwise to $1$, and the proof of this algorithm is the same. Now from Theorem~\ref{TorusClass} we can see that the number of contact structures on a solid torus with upper meridian $1/r$ and dividing slope $1$ will be $(a_0-1)(a_1-1)\cdots(a_n-1)$ (if this is not clear this is the same argument used on Honda's classification of contact structures on solid tori \cite{Hon001}, see also \cite{EtnyreRoy21}). Also notice that the first integer in the path is $[a_0-1]$ (and in our case this is $n$). So the number of tight contact structures on a solid torus with upper meridian $1/r$ and dividing slope $a_0-1$ will be given by $(a_1-1)\cdots(a_n-1)$.

Consider the diffeomorphism of $T^2\times[0,1]$ that sends $t\in [0,1]$ to $1-t$ and exchanges the coordinates on $T^2$. This is an orientation preserving diffeomorphism and quotienting out curves on the boundary to get solid tori. This diffeomorphism will induce a diffeomorphism from $S^a$ to $S_{1/a}$ (see Section~\ref{classtori} for the notation). Above we gave the formula for the number of contact structures in $\Tight(S^{1/r}, n)$ so using this diffeomorphism the same formula gives the number of contact structures in $\Tight(S_r, 1/n)$, as claimed. 
%
\end{proof}
From the above, we see that the number of contact structures on $S^3_K(r)$ coming from $(S^3_T,\xi_{1})$ is $n\Phi(r)$ and form $(S^3_T,\xi_{2})$ is $(n-1)\Phi(r)$ and so on until we see there are $\Phi(r)$ coming from $(S^3_T,\xi_{n})$. That is there are $\frac{(n+1)n}2\Phi(r)$ contact structure, as claimed in Theorem~\ref{class}. Moreover, we can think of them as arranged in a triangle, as in Figure~\ref{triangle}, where each vertex in the triangle corresponds to $\Phi(r)$ edges. 


\section{Symplectic fillings and splitting of symplectic fillings}\label{sympfillsec}
In this section we recall various notions of symplectic fillability and the splitting theorem of Cristian and Menke \cite{CM22} for symplectic fillings. 

\subsection{Symplectic fillings}\label{sympfill}
There are several different notions of symplectic fillability for contact structures. We will be interested in three of them: strong fillability, exact fillability and Stein fillability. 

A closed contact manifold $(M, \xi)$ is said to be \dfn{strongly symplectically fillable} if there is a compact symplectic manifold $(X, \omega)$ such that 
\begin{itemize}
\item $\partial X=M$ as oriented manifolds,
\item $\omega$ is exact near the boundary, 
\item a primitive $\alpha$ for $\omega$ near the boundary can be chosen so that $ker(\alpha|_ {{M}})=\xi$.
\end{itemize}
In this case, $(X, \omega)$ is called \emph{a strong symplectic filling of $(M, \xi)$}. 

The strong symplectic filling of $(Y,\xi)$ is said to be an \dfn{exact symplectic filling} if the primitive $\alpha$ for $\omega$ in the definition can be chosen on all of $X$. 

A \dfn{Stein manifold} is a triple  $(X,J, \phi)$ where $J$ is a complex structure on $X$ (more specifically, $J:TX\to TX$ is a bundle map induced from the complex structure on $X$), and $\phi$ is a proper exhausting function such that $\omega_\phi(v,w)=-d(d\phi\circ J)(v,w)$ is non-degenerate (and hence a symplectic form). A sub-level set of $\phi$ is called a \dfn{Stein domain}. We say a contact manifold $(M,\xi)$ is \dfn{Stein fillable} if $Y$ is the boundary of a Stein domain and $\xi=\ker(d\phi\circ J)$. 

It is clear from the definitions that a Stein fillable contact structure is exactly fillable and an exactly fillable contact structure is strongly fillable.

\subsection{Splittings of symplectic fillings}
A key tool in our results is a theorem of Christian and Menke that shows how to split a symplectic filling along a solid torus under certain circumstances. We need a few preliminary definitions before stating their theorem. 

Given a $4$-manifold $X$ with boundary $M$ and a properly embedded solid torus $S$ in $X$ we call $X'=X\setminus S$ the result of \dfn{splitting $X$ along $S$}. Notice that the boundary $\partial X'$ can be described by cutting $M$ along $T=\partial S$ and gluing in two solid tori to the resulting boundary components. We note that both solid tori are glued so that the meridian goes to the same curve on both torus boundary components of $M\setminus T$. We denote the resulting manifold $M'$ and say it is obtained from $M$ by \dfn{splitting along $T$}. If we have coordinates on $T$ so that curves can be indicated by a slope and the solid tori are Dehn fillings along curves of slope $s$ then we say that $M'$ is obtained from $M$ by a \dfn{splitting of slope $s$ along $T$.} 

A convex torus $T$ in a symplectic manifold $(M,\xi)$ is called a \dfn{mixed torus} if there is a neighborhood $T^2\times [-1,1]$ of $T=T^2\times\{0\}$ such that the contact structure restricted to $T^2\times [-1,0]$ and to $T^2\times [0,1]$ are both basic slices and they have different signs. We call $T\times \{1\}$ and $T^2\times \{-1\}$ the \dfn{associated tori to $T$}. Let $s_i$ be the slope of the dividing curves on $T^2\times \{i\}$. Denote by $E$ the set of vertices in the Farey graph that are in the interval $[s_1,s_{-1}]$ (that is clockwise of $s_1$ and anti-clockwise of $s_{-1}$) that have an edge to $s_0$. We call $E$ the \dfn{exceptional slopes for $T$}. 

We are now ready to state Christian and Menke's splitting theorem \cite{CM22}. 

\begin{theorem}\label{cm}
If $(X,\omega)$ is a exact filing of the contact manifold $(M,\xi)$ and $T$ is a mixed convex torus. Then $X$ can be split along some solid torus $S$ with boundary $T$ so that the resulting manifold $X'$ is an exact symplectic filling of $M$ after a slope $s$ splitting along $T$ for some $s\in E$. 
\end{theorem}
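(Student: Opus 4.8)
The plan is to prove the statement with holomorphic-curve techniques in the spirit of Symplectic Field Theory, following the neck-stretching and finite-energy-foliation philosophy of Wendl, Hind, and Menke. First I would put the geometry near $T$ into a normal form. Because $T$ is mixed, its neighborhood $T^2\times[-1,1]$ splits into two basic slices of opposite sign, and Honda's classification gives an explicit contact germ on each slice. The opposite signs are exactly what make $T$ useful: they furnish a bypass along $T$ from each side, and attaching these lets one replace the middle slope $s_0$ by any exceptional slope $s\in E$, realized by a convex torus $T_s$ parallel to $T$. I would then choose an adapted almost complex structure $J$ on $(X,\omega)$ and on the relevant symplectization collar so that the Legendrian divides of $T_s$ become nondegenerate Reeb orbits and so that, after perturbing $T_s$ to a contact-type torus, a standard family of $J$-holomorphic curves asymptotic to those orbits appears on the solid-torus side of the model.

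Second, I would neck-stretch $(X,\omega,J)$ along this contact-type torus and run SFT compactness on the moduli space of these curves as the neck length tends to infinity. The limit is a holomorphic building, and the heart of the argument is an index-and-energy count, using the exactness of $\omega$ to obtain the a priori energy bound, showing that the building cannot break into unexpected lower levels. The only admissible outcome exhibits a compact region of $X$ swept out by the curve family; this region is the solid torus $S$ with $\partial S=T$ and meridional slope $s$.

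Third, with $S$ located I would set $X'=X\setminus S$ and verify that it is an exact filling of the manifold $M'$ produced from $M$ by the slope-$s$ splitting along $T$. The foliating curves identify the contact structure induced on the two new boundary tori as the claimed one, and the Liouville field of the original exact filling, extended over the caps supplied by the model, provides a global primitive for $\omega$ on $X'$; this is where one genuinely needs exactness rather than mere strong fillability.

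I expect the main obstacle to be the holomorphic analysis of the second step: showing that the moduli space is nonempty and can be made transverse (perhaps only after a virtual or domain-dependent perturbation), and then controlling the SFT limit tightly enough that it isolates a single solid torus of one admissible slope $s\in E$ rather than a more complicated degeneration. Handling nondegeneracy of the torus Reeb orbits, excluding multiply covered components, and carrying out the precise bookkeeping that forces the resulting slope to lie in the restricted set $E$ are the delicate points, and they are precisely where the exactness hypothesis enters to prevent energy from leaking into the interior of $X$.
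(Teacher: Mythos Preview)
The paper does not prove Theorem~\ref{cm}; it is quoted without proof as the main result of Christian and Menke \cite{CM22}, and the present paper only \emph{applies} it (in the proofs of Theorems~\ref{minresult}, \ref{generalizeMin}, \ref{3n1cable}, and \ref{1329}). So there is no ``paper's own proof'' to compare your proposal against.

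That said, your outline is broadly in the spirit of the actual Christian--Menke argument, which does proceed via neck-stretching and SFT compactness for punctured holomorphic curves, in the tradition of Wendl's finite-energy foliations and Menke's earlier JSJ-type splitting theorem. A few cautions: your first step conflates the contact topology of the mixed torus with the analytic setup more than the real proof does---the exceptional slopes $E$ arise not from bypass attachments on $T$ but from the asymptotics of the holomorphic curves one produces, and pinning down exactly why the limiting meridional slope must land in $E$ is a genuine computation rather than a consequence of the convex-surface picture. Your third step is also optimistic: extending the Liouville form over the caps to make $X'$ exact requires a specific model and is not automatic from ``the foliating curves identify the contact structure.'' These are the hard technical points of \cite{CM22}, and your sketch correctly flags them as obstacles without resolving them; as a proof \emph{proposal} it is reasonable, but it is a long way from a proof.
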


We now give Min's proof about the non-exact fillability of some of the contact structures on $-\Sigma(2, 3, 6n+5)$. To do so, we first let $B$ be the torus bundle over $S^1$ obtained from $0$-surgery on the right handed trefoil and $C$ be the core of the surgery torus. It is known that $B$ has tight contact structures $\xi_n$, $n\geq 0$, where $(B,\xi_1)$ is Stein fillable and $(B,\xi_n)$, for $n>1$ is strongly but not Stein fillable, see \cite{GV16}. One may easily see that $-\Sigma(2, 3, 6n+5)$ is obtained from $-n$ surgery on $C$, see \cite{GV16}. In $\xi_1$, $C$ can be realized by a Legendrian knot $L_1$ with contact twisting $0$ (with respect to a framing coming from the bundle structure) and we notice that the complement of a standard neighborhood of $L_1$ is also the complement of the maximal Thurston-Bennequin invariant realization of the right handed trefoil in $(S^3, \xi_{std})$. So one can stabilize $L_1$, $n-1$ times to obtain a Legendrian knot with twisting $-n+1$. Then Legendrian surgery yields a contact structure on $-\Sigma(2, 3, 6n+5)$. Denote this contact structure by $\xi^1_l$ where $l$ is the number of negative stabilizations done to $L_1$. 
Notice that all of these contact structures are Stein fillable since so is $\xi_1$. More generally we have,
\begin{lemma}\label{firstrowStein}
Using the notation for tight contact structures on $S^3_T(r)$ established before Lemma~\ref{countonT2}, the contact structure $\xi^1_{l,P}$ is Stein fillable. 
\end{lemma}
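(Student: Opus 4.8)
The plan is to exhibit each $\xi^1_{l,P}$ as the outcome of a \emph{negative} contact surgery on a single Legendrian right-handed trefoil in the standard tight sphere, so that Stein fillability is inherited from $(S^3,\xi_{std})$. By the classification of tight contact structures on $S^3_T(r)$ established above, the structure $\xi^1_{l,P}$ (so $k=1$) is obtained by gluing $(S^3_T,\xi_1)$ to the contact solid torus $(S_r,\xi)$ with $\xi\in\Tight(S_r,1)$ whose defining decorated path from $r$ clockwise to $1$ encodes $(l,P)$. The key structural input is Theorem~\ref{nonthickenable} with $n=1$: it identifies $(S^3_T,\xi_1)$ with the complement of a standard neighborhood of a Legendrian right-handed trefoil $L$ of maximal Thurston--Bennequin number $\tb(L)=1$ in $(S^3,\xi_{std})$, the removed neighborhood $\nu(L)=S_\infty$ carrying dividing slope $1$.

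Given this, I would check that re-gluing $(S_r,\xi)$ in place of $S_\infty$ is exactly contact $(r-1)$-surgery on $L$. Since $\tb(L)=1$, replacing $S_\infty$ by the solid torus of meridional slope $r=\tb(L)+(r-1)$ is contact $(\rho)$-surgery with $\rho=r-1$, and by definition one is free to put any contact structure from $\Tight(S_r,\tb(L))=\Tight(S_r,1)$ on the glued torus. As $r\in(0,1)$ we have $\rho=r-1\in(-1,0)$, so this is a negative contact surgery, and every $\xi^1_{l,P}$ arises this way as $\xi$ ranges over $\Tight(S_r,1)$.

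Finally I would invoke the surgical preservation of fillability reviewed in Section~\ref{contsurg}: for $\rho<0$, contact $(\rho)$-surgery on $L$ is a sequence of Legendrian surgeries on stabilizations of $L$ carried out inside its standard neighborhood \cite{DingGeiges04}, and Legendrian surgery preserves Stein fillability \cite{EtnyreHonda02b}. Since $(S^3,\xi_{std})$ is Stein filled by $B^4$, it follows that $(S^3_T(r),\xi^1_{l,P})$ is Stein fillable for every $l$ and $P$.

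The one point demanding care is the coefficient computation in the middle step, because the whole argument rests on the surgery being negative: one must confirm that the gluing prescribed by the classification has meridional slope $r<\tb(L)=1$, so that $\rho=r-1<0$ and the blanket fillability-preservation applies uniformly to all decorations $(l,P)$ at once. This is the same mechanism as in Min's treatment of $-\Sigma(2,3,6n+5)$ recalled just above, now applied for arbitrary $r\in(0,1)$ rather than only $r=1/(n+1)$; no genuinely new geometric ingredient is required beyond Theorem~\ref{nonthickenable}.
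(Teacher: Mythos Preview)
Your proof is correct and follows the same underlying strategy as the paper's: realize $\xi^1_{l,P}$ as a negative contact surgery on a Legendrian knot in a Stein fillable contact manifold whose complement is $(S^3_T,\xi_1)$. The only difference is the choice of ambient manifold. The paper works inside $(B,\xi_1)$, the Stein fillable contact structure on $0$-surgery on the trefoil, and performs negative contact surgery on the Legendrian core $L_1$ of the surgery torus; you instead work directly in $(S^3,\xi_{std})$ and perform contact $(r-1)$-surgery on the maximal Thurston--Bennequin Legendrian trefoil $L$. Since the complement of $L_1$ in $B$ and of $L$ in $S^3$ are both $(S^3_T,\xi_1)$, the two constructions yield the same contact manifolds, and both surgeries are negative for $r\in(0,1)$. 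Your route is marginally more self-contained, as it does not need the cited fact from \cite{GV16} that $(B,\xi_1)$ is Stein fillable; the paper's route fits more naturally with the surrounding discussion, which is already organized around the knots $L_k\subset (B,\xi_k)$ used to describe all the $\xi^k_{l}$.
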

\begin{proof}
The manifold $S^3_T(r)$ is obtained from $B$ by $-1/r$ Dehn surgery on $C$ in $B$. Since $-1/k<0$, we see that this is Legendrian surgery on $L_1$ in $(B,\xi_1)$ which is Stein fillable, hence $(S^3_T(r), \xi^1_{l,P})$ is too. 
\end{proof}

Similarly, in $(B,\xi_k)$ there is a Legendrian realization $L_k$ of $C$ with contact twisting $-k+1$. (We note that the complement of a standard neighborhood of $L_k$ is the contact structure $(S^3_K,\xi_k)$ from Theorem~\ref{nonthickenable}.)  So if we stabilize $-n+k$ times we obtain a knot with contact twisting $-n+1$. Thus, Legendrian surgery on $L_k$ yields a contact structure on $-\Sigma(2, 3, 6n+5)$ which we denote by $\xi^k_l$ where $l$ is the number of negative stabilizations done to $L_k$ before surgery.  The $\xi^k_l$ for $k\leq n$, and $l\in \{0, \ldots, n-k\}$ are all the tight contact structures on $-\Sigma(2, 3, 6n+5)$. Now Min's result \cite{Min22} says the following.
\begin{theorem}\label{minresult}
The contact structure $\xi^k_{l}$ is not exactly fillable if $l\not=0$ or $n-k$. 
\end{theorem}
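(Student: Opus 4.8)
My plan is to locate a mixed torus inside $(-\Sigma(2,3,6n+5),\xi^k_l)$ whenever $0<l<n-k$, and then to feed an assumed exact filling into the Christian--Menke splitting theorem (Theorem~\ref{cm}) to force a contradiction. Recall from Section~\ref{sec:class} that $\xi^k_l$ is obtained by gluing $(S^3_T,\xi_k)$ to the solid torus $S_{1/(n+1)}$, whose defining path contains the continued fraction block $1/n,1/(n-1),\dots,1/k$ carrying exactly $l$ negative signs. When $0<l<n-k$ the block has both a positive and a negative edge, and since signs may be shuffled freely within a continued fraction block (Theorem~\ref{TorusClass}) I can arrange two \emph{consecutive} edges, say $[1/(j+1),1/j]$ and $[1/j,1/(j-1)]$, to carry opposite signs. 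The neighborhood of the convex torus of slope $1/j$ spanning these two basic slices is then a mixed torus $T$ in the sense of Section~\ref{sympfillsec}, with central slope $s_0=1/j$ and associated slopes $s_{\pm1}=1/(j\mp1)$.

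Assume for contradiction that $(-\Sigma(2,3,6n+5),\xi^k_l)$ admits an exact filling $(X,\omega)$. By Theorem~\ref{cm}, $X$ splits along a solid torus with boundary $T$ to yield an exact filling $(X',\omega')$ of the contact manifold $(M',\xi')$ obtained from $-\Sigma(2,3,6n+5)$ by a slope-$s$ splitting along $T$, for some exceptional slope $s\in E$. The next step is to compute $E$ explicitly: it consists of the Farey vertices lying between $s_1$ and $s_{-1}$ that are joined to $1/j$ by an edge, which from the local picture around $1/j$ is a short finite list. Because $T$ is isotopic to a torus parallel to $\partial S_{1/(n+1)}$ it separates the inner core solid torus from the complementary copy of $(S^3_T,\xi_k)$, so each candidate slope $s\in E$ determines a concrete manifold $M'$: one side caps the inner solid torus to a lens space while the other re-fills the trefoil complement to a surgery on the trefoil, and I would read off $\xi'$ from how the surviving signed edges of the path reassemble after the slope-$s$ filling.

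The heart of the argument is then to show that for \emph{every} $s\in E$ the resulting contact manifold $(M',\xi')$ fails to be exactly fillable, contradicting the existence of $(X',\omega')$. I expect the dichotomy to be that each such $M'$ is either reducible, splitting off a connected summand carrying a contact structure already known not to be exactly fillable (for instance the non-exactly-fillable top structure on a smaller Brieskorn sphere $-\Sigma(2,3,6m+5)$, the base case supplied by \cite{GV16}), or else carries an overtwisted contact structure produced by the sign mismatch across the cut. In the first case I would invoke that exact fillability of a contact connected sum forces each summand to be exactly fillable, and in the second case that an overtwisted contact structure is not even weakly, hence not exactly, fillable. Either way the assumed filling of the simpler manifold cannot exist, and ruling out all finitely many $s\in E$ completes the proof; an induction on $n$ (or on the number of edges in the block) handles the reductions that again land on interior structures.

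The step I expect to be the main obstacle is precisely the identification in the previous paragraph: for each exceptional slope $s$, pinning down the diffeomorphism type of $M'$ together with the isotopy class of $\xi'$ sharply enough to certify non-fillability. This requires careful bookkeeping of how the decorations on the remaining edges transform under the slope-$s$ Dehn filling and matching the outcome to either a connected sum with a known non-fillable factor or to an overtwisted structure. Once the correspondence between exceptional slopes and split manifolds is set up, the Farey-graph combinatorics of Section~\ref{sec:class} together with the gluing description of the $\xi^k_l$ should make each individual case routine.
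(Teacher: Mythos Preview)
Your overall strategy---locate a mixed torus coming from the both-sign stabilization and feed a putative exact filling into Theorem~\ref{cm}---is exactly the paper's approach. But the heart of the argument, which you flag as ``the main obstacle,'' is where your expectations diverge from what actually happens, and this is a genuine gap.

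You anticipate that the split manifolds will be smaller Brieskorn spheres $-\Sigma(2,3,6m+5)$ and that the proof will proceed by induction, or that some pieces will be overtwisted. Neither occurs. Once you compute the exceptional set for the mixed torus at slope $1/j$ with associated tori at $1/(j\pm1)$, you find that the \emph{only} vertex in the relevant arc of the Farey graph with an edge to $1/j$ is $0$. So $E=\{0\}$, and there is a single case. Splitting at slope $0$ does not produce a smaller Brieskorn sphere: on the trefoil-complement side you get $S^3_T$ capped by a solid torus of meridian $0$, which is the torus bundle $B=S^3_T(0)$; the path from $0$ to $1/k$ in the Farey graph is a single edge, so the contact structure on the cap is the unique tight one and the glued-up structure is precisely $(B,\xi_k)$. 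On the other side you get a lens space. Since every symplectic filling of a lens space has connected boundary, the split filling $X'$ must be disconnected, and one component is an exact filling of $(B,\xi_k)$. The contradiction then comes from the known fact (see the discussion before Lemma~\ref{firstrowStein}) that $(B,\xi_k)$ admits no exact filling for $k\ge 2$. No induction is needed and nothing is overtwisted.

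In short: your setup is correct, but the step you left open resolves much more cleanly than you guessed, and in a different direction. The key input you are missing is the non-exact-fillability of the torus bundle $(B,\xi_k)$, not of smaller Brieskorn spheres.
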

We give the proof here as a warmup of our proofs below.

\begin{proof}
If $l$ is not $0$ or $n-k$, then the Legendrian knot $L_k$ discussed above was stabilized both positively and negatively to get $L'_k$ on which we performed Legendrian surgery to get $\eta_{k,l}$. Thus, the complement of a standard neighborhood of $L'_k$ contains a mixed torus $T$ with dividing curves of slope $-k+1$ and the associated tori have slopes $-k$ and $-k+2$. See the first paragraph of Section~\ref{contsurg}. Recall $K$ (and hence $L_k'$) live in $0$ surgery on the trefoil and $K$ is the core of the surgery torus. Thus surgery on $K$ can be considered surgery on the right handed trefoil (this is clear from the proof of the ``slam dunk" operation on Dehn surgery diagrams, see \cite{Rolfsen76}), and considering $T$ as a torus bounding a solid torus neighborhood of $K$, its slope is $1/(k-1)$ and its associated tori have slope $1/k$ and $1/(k-2)$. Thus we see that the only exceptional slopes for $T$ is $0$. 

Now if $(X,\omega)$ is an exact filling of $(-\Sigma(2, 3, 6n+5), \xi^k_{l})$ then Theorem~\ref{cm} says that $X$ can be split into $X'$, a filling of $S^3_K(0)$, and a lens space (since the complement of $T$ is a solid torus and $S^3_K$). Moreover, since there is an edge in the Farey graph from $1/(k-1)$ to $0$, there is a unique contact structure on this solid torus, and we see that the contact structure on $S^3_K(0)$ is $\xi_k$. Recall that any symplectic filling of a lens space must have connected boundary \cite{Etnyre04b}, and thus $X'$ is disconnected and one component of $X'$ is a strong symplectic filling of $(S^3_K(0), \xi_k)$, but this contradicts the fact that only $(S^3_K(0), \xi_1)$ is strongly fillable, thus $\xi^k_{l}$ is not exactly fillable. 
\end{proof}
Generalizing Min's work we have the following result. 
\begin{theorem}\label{generalizeMin}
Using the notation for tight contact structures on $S^3_K(r)$ established before Lemma~\ref{countonT2}, the contact structure $\xi^k_{l,P}$ is not exactly fillable if $l\not=0$ or $n-k$. 
\end{theorem}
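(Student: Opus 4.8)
The plan is to run Min's argument for Theorem~\ref{minresult} essentially verbatim, but inside the solid-torus gluing description of Section~\ref{sec:class}, so that the extra decorated path $P$ and the general slope $r$ are carried along without obstructing the splitting. I will work in the regime $k\geq 2$; for $k=1$ the structure is already Stein fillable by Lemma~\ref{firstrowStein}, and $l\neq 0,n-k$ forces $n-k\geq 2$, so $2\leq k\leq n-2$. Recall that $(S^3_K(r),\xi^k_{l,P})$ is $(S^3_K,\xi_k)$ glued to a structure in $\Tight(S_r,1/k)$ whose path is $P$ followed by the continued fraction block $1/n,\dots,1/k$, and that $l$ is the number of negative signs in this block.

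First I would locate a mixed torus. Since $0<l<n-k$, the block carries both signs, and because the structure depends on the decoration only up to shuffling within the block, I would pass to the representative in which the edge from $1/(k+2)$ to $1/(k+1)$ and the edge from $1/(k+1)$ to $1/k$ carry opposite signs. By the classification recalled after Theorem~\ref{TorusClass} this exhibits, inside $S_r$, a mixed torus $T$ of slope $1/(k+1)$ with associated tori of slopes $1/k$ and $1/(k+2)$. I would then compute the exceptional slopes: the only Farey vertex on the relevant arc joined to $1/(k+1)$ by an edge is $0$ (here $k\geq 2$ is used, so that $1$ is not a Farey neighbor of $1/(k+1)$), so $E=\{0\}$, exactly as in Min's computation.

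I would feed a hypothetical exact filling $X$ into Theorem~\ref{cm}: it splits $X$ along a solid torus bounded by $T$ into an exact filling $X'$ of $M'$, the slope-$0$ splitting of $S^3_K(r)$ along $T$. The torus $T$ separates $S^3_K(r)$ into the core-side solid torus $S'$ of $S_r$ (lower meridian $r$, dividing slope $1/(k+1)$, which absorbs $P$ and the inner part of the block) and the piece $W=S^3_K\cup(\text{the basic slice from }1/k\text{ to }1/(k+1))$. Filling each along slope $0$ turns $S'$ into a lens space (or $S^3$) and turns $W$ into $S^3_K(0)=B$. The heart of the generalization is to identify the filled contact structure on $W=B$: the trefoil-complement factor still carries $\xi_k$, while the solid-torus factor is $S_0$ with lower meridian $0$, dividing slope $1/k$, and path $0,\,1/(k+1),\,1/k$; since the edge at the meridian $0$ is unsigned, this consistently shortens to the single edge $0,\,1/k$, the unique tight structure by Kanda, so $W$ fills to exactly $(B,\xi_k)$.

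To finish I would use that symplectic fillings of lens spaces and of $S^3$ are connected \cite{Etnyre04b}, forcing $X'$ to split off an exact filling of $(B,\xi_k)$; since $k\geq 2$, this contradicts the non-fillability of $(B,\xi_k)$ for $k>1$ that is the key input in the proof of Theorem~\ref{minresult}. The step I expect to be the main obstacle is exactly this last identification --- confirming that the slope-$0$ filling lands on the \emph{non}-fillable $\xi_k$ (and not on some fillable $\xi_{k'}$), and that the path $P$ and the general slope $r$ are wholly absorbed into the lens-space summand. This is the only place where the computation genuinely differs from Min's, and it reduces to the path-shortening bookkeeping in the solid torus classification of Theorem~\ref{TorusClass}.
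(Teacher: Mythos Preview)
Your proposal is correct and follows essentially the same approach as the paper, whose proof is literally the one line ``identical to that of Theorem~\ref{minresult}.'' You have carefully unwound that argument in the general setting: locating the mixed torus at slope $1/(k+1)$ inside the continued fraction block, computing the unique exceptional slope $0$, and verifying via path-shortening that the slope-$0$ splitting produces $(B,\xi_k)$ on the trefoil-complement side and a lens space on the core side---exactly the contradiction used for Theorem~\ref{minresult}, with the extra data $P$ harmlessly absorbed into the lens-space summand.
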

Considering contact structures arranged in a triangle with each vertex having $\Phi(r)$ contact structures, as discussed after Lemma~\ref{countonT2}, this theorem says that any contact structure on the interior of the triangle is not exactly fillable.  
\begin{proof}
The proof is identical to that of Theorem~\ref{minresult}.
\end{proof}

\section{Surgeries on the trefoil and Stein fillable contact structures}
In this section we will prove Theorem~\ref{main1} that says if $n >3$ and 
\[
r\in \left[ \frac{2n-1}{2n^2}, \frac{2}{2n+1}\right)
\]
then $S^3_{T}(r)$ has 
\[
\frac{n(n+1)}2 \Phi(r)
\]
tight contact structures,
\begin{itemize}
\item  $(2n-1)\Phi(r)$ are Stein fillable (these are the contact structures at the base, top and half the structures at along the vertical sides of the triangle),  
\item $\frac{(n-3)(n-2)}2 \Phi(r)$ are strongly fillable, but not exact, or Stein, fillable (these are the ones in the interior of the triangle), and 
\item $(n-2)\Phi(r)$ that are strongly fillable, but we don't know if they are Stein fillable (these are half of the structures along the vertical sides of the triangle). These later contact structures are Stein fillable if and only if the contact structures at the same place on the triangle for $-\Sigma(2,3, 6n+5)$ are Stein fillable. 
 \end{itemize}

For $n=2$ or $3$ all the contact structures are Stein fillable.

\begin{proof}[Proof of Theorem~\ref{main1}]
Recall from Theorem~\ref{class} that there are $\frac{(n+1)n}2 \Phi(r)$ contact structures on $S^3_T(r)$. From Theorem~\ref{generalizeMin} we know that the ones on the interior of the triangle are not exactly fillable. This establishes the second bullet point.  

To establish the first bullet point we note that all of the contact structures in the base of the triangle, which contains $n\Phi(r)$ contact structures, are Stein fillable by Lemma~\ref{firstrowStein}. 

We now show that all the contact structures at the peak are Stein fillable. Recall these are $\xi^n_{0,P}$. (Here and below we are using the notation for tight contact structures on $S^3_T(r)$ established before Lemma~\ref{countonT2}.) 
We begin by considering the surgery $r=\frac{2n-1}{2n^2}$. From Lemma~\ref{surgeryoncable} and Remark~\ref{CableDiffeo} we know that one can achieve $\frac{2n-1}{2n^2}$ by performing surgery on $2$ copies of the $(n,1)$-cable of $T$ with surgery coefficient one less than the framing coming from the torus on which the cable sits. Now consider the solid torus $S_n^\pm$ in $(S^3,\xi_{std})$ from Theorem~\ref{nonthickenable}. We know $\partial S_\pm$ is a convex torus with two dividing curves of slope $1/n$. So the contact structure on the solid torus is described by the path from $\infty$ clockwise to $0$, and then to $1/n$. There is no sign on the first jump from $\infty$ to $0$, and all $\pm$ on the other edge. Performing Legendrian surgery on the two Legendrian divides on $\partial S_n^\pm$, we see by Lemma~\ref{contactcable} that we get $S^3_T(r)$ given by gluing $(S^3_T,\xi_k)$ to the solid torus $S_r$ with contact structure described by a path from $r$ to $1/n$ with all edges (but the first) decorated with a $\pm$. In the case that we use the $+$ sign, we see that this is the contact structure $\xi^n_{0,P_+}$ where $P_+$ is the path from $r=\frac{2n-1}{2n^2}$ to $1/n$. In Firgure~\ref{1ncablefig} we see that there are two edges in the minimal path from $r$ to $1/n$ and the second edge will have a $+$ sign on it. We have a similar discussion for the $-$ sign but get the contact structure $\xi^n_{0,P_-}$ where $P_-$ has a $-$ sign on the one edge with a sign. Thus, these two contact structures are Stein fillable. This completes the proof that all the contact structures at the top of the triangle are Stein fillable when $r=\frac{2n-1}{2n^2}$.

The only other two contact structures not accounted for by filling $(S^3_K,\xi_k)$ are $\xi^k_{0,P_-}$ and $\xi^k_{k,P_+}$. Notice both of these contact structures have a mixed torus with dividing slope $1/n$. Thus, just as in the proof of Theorem~\ref{minresult} we see that if one of these contact structures were exactly fillable then $\xi_k$ on $B$  would also be exactly fillable, but we know that these contact structures are not exactly fillable (see the paragraph after Theorem~\ref{cm} for the notation and facts about $\xi_k$). This completes the proof when $r=\frac{2n-1}{2n^2}$.

We notice that $\frac{2n-1}{2n^2}$ is given in the Farey graph by $s_{2} \oplus (n-2)s_{1}$ where $s_{1}=\frac{1}{n+1}\oplus \frac1n= \frac{2}{2n+1}$ and $s_{2}=2\frac{1}{n+1}\oplus \frac1n=\frac {3}{3n+2}$. See Figure~\ref{1ncablefig}. 
\begin{figure}[htb]

{\small
\begin{overpic}
{figs/n1cable}
\put(10, 2){$\frac 1{n+1}$}
\put(195, 2){$\frac 1n$}
\put(371, 2){$\frac{1}{n-1}$}
\put(99, 2){$\frac{2}{2n+1}$}
\put(80, 2){$\frac{2n-1}{2n^2}$}
\put(36, 2){$\frac{3n-1}{3n+2}$}
\end{overpic}}
\caption{Part of the path from $\frac{2n-1}{2n^2}$ to $\frac 1n$. Some of the distances are not to scale.}
\label{1ncablefig}
\end{figure}
Thus, we see there is an edge in the Farey graph from $\frac{2n-1}{2n^2}$ to $\frac 2{2n+1}$ and so in any contact structure $\xi^n_{0,P}$ on $S^3_T(\frac{2n-1}{2n^2})$ there is a solid torus $S_{(2n-1)/{2n^2}}$ with dividing slope $\frac 2{2n+1}$. This is a standard neighborhood of a Legendrian knot and any contact surgery $r\in \left(\frac{2n-1}{2n^2},\frac 2{2n+1}\right)$ can be done via Legendrian surgery on a link in the solid torus. Thus any contact structure $\xi^n_{0,P}$ on $S^3_T(r)$ is Stein fillable, see Section~\ref{contsurg}, completing the proof that all contact structures at the top of the triangle are Stein fillable. Note we have also completed the proof that all contact structures on $S^3_T(r)$ are Stein fillable when $n=2$. 

To finish the case when $n=3$, we need to see that the contact structures $\xi^2_{0,P}$ and $\xi^2_{1,P}$ in the middle row of the triangle are Stein fillable. To this end, recall we mentioned in the introduction that Min, Tosun, and the first author showed that the contact structures in the middle row for $-\Sigma(2,3,23)=S^3_T(1/4)$ were Stein fillable. They showed this by considering Legendrian surgery on the $(2,1)$ cable of $T$. In particular for the solid torus $S^\pm_2$, there are Legendrian divides of slope $1/2$ and Legendrian surgery on these gives the desired contact structures. As above, we see that there is a solid torus $S_{1/4}$ with convex boundary of slope $1/3$. Thus this is a neighborhood of a Legendrian knot and any contact surgery $r\in(1/4,1/3)$ will be Stein fillable. In particular the surgery slopes we are considering in this theorem will give Stein fillable contact structures in the second row of the triangle. 

We are left to show that, when $n>3$, on the vertical edges of the triangle, half the contact structures are Stein fillable and the other half will be if and only if so are the corresponding contact structures on $-\Sigma(2,3,6n+5)$. These contact structures are of the form $\xi^k_{0,P}$ and $\xi^k_{n-k,P}$ for $k=2, \ldots, n-1$.
We begin by considering the case where $r=\frac{2n-1}{2n^2}$. As above we know that one can achieve $\frac{2n-1}{2n^2}$ by performing surgery on two copies of the $(n,1)$-cable of $T$ with surgery coefficient one less than the framing coming from the torus on which the cable sits. Now consider the solid torus $S_k^\pm$ in $(S^3,\xi_{std})$ from Theorem~\ref{nonthickenable}. Inside this torus there is a convex torus $T_\pm$ with two dividing curves of slope $1/n$. So the contact structure on the solid torus is described by the path from $\infty$ clockwise to $0$, then to $1/n$ and finally to $1/k$. There is no sign on the first jump from $\infty$ to $0$, and all $\pm$ are on the other two parts of the path. Performing Legendrian surgery on $2$ of the Legendrian divides on $T_\pm$, we see by Lemma~\ref{contactcable} that we get $S^3_T(r)$ given by gluing $(S^3_T,\xi_k)$ to the solid torus $S_r$ with contact structure described by a path from $r$ to $1/k$ with all edges (but the first) decorated with a $\pm$. In the case that we use the $+$ sign, we see that this is the contact structure $\xi^k_{0,P_+}$ where $P_+$ is the path from $r=\frac{2n-1}{2n^2}$ to $1/k$. Above we saw that there are two edges in the minimal path from $r$ to $1/n$ and the second edge will have a $+$ sign on it. We have a similar discussion for the $-$ sign but get the contact structure $\xi^k_{n-k,P_-}$ where $P_-$ has a $-$ sign on the one edge with a sign. Thus these two contact structures are Stein fillable. The only other two contact structures not accounted for by filling $(S^3_T,\xi_k)$ are $\xi^k_{0,P_-}$ and $\xi^k_{n-k,P_+}$. Notice that both of these contact structures have a mixed torus with dividing slope $1/n$. Thus, we see from Figure~\ref{1ncablefig} that there is only one exceptional slope $\{1/(n+1)\}$ for this torus and so just as in the proof of Theorem~\ref{minresult}, we see that if one of these contact structures were exactly fillable then $\xi^k_0$ and $\xi^k_{n-k}$ on $-\Sigma(2,3,6n+5)=S^3_T(1/(n+1))$ would be strongly fillable (see the beginning of the introduction for the notation for these contact structures).  Moreover, if these structures were exactly or Stein fillable they would contain a solid torus $S_{1/(n+1)}$ with dividing slope $1/n$. These are neighborhoods of Legendrian knots and hence any contact surgery in $(1/(n+1),1/n)$ would yield exact or Stein fillable contact structures on $S^3_K(r)$ for $r\in(1/(n+2),1/n)$. This completes the proof when $r=\frac{2n-1}{2n^2}$.

The case for $r\in \left(\frac{2n-1}{2n^2}, \frac{2}{2n+1}\right)$ is handled through Legendrian surgery exactly like we did when discussing the contact structures at the top of the triangle above. 
\end{proof}

\section{Fillability of other surgeries on the trefoil}
We are now ready to prove Theorem~\ref{3n1cable} which says that 
for 
\[
r\in \left[\frac{9}{25}, \frac{4}{11}\right)
\]
the manifold $S^3_T(r)$ has $3\Phi(r)$ tight contact structures of which $(2\Phi(r)+2)$ are Stein fillable and $(\Phi(r)-2)$ are strongly symplectically fillable, but not exactly symplectically fillable. 

\begin{proof}[Proof of Theorem~\ref{3n1cable}]
An $r$ as in the theorem is in $(1/3,1/2)$ and so according to Theorem~\ref{class} there are $3\Phi(r)$ contact structures and they organized in a triangle with base and hight two. From Lemma~\ref{firstrowStein} we know that the base of the triangle is Stein fillable. So we are left to see what happens for the top of the triangle. These contact structures are all of the form $\xi^2_{0,P}$ for some signed path from $r$ to $1/2$, where we are using the notation for tight contact structures on $S^3_T(r)$ established before Lemma~\ref{countonT2}. 

We begin by considering the surgery coefficient $r=\frac 9 {25}$. We can see, using Lemma~\ref{surgeryoncable}, that this surgery can be effected by performing surgery on the $(5,2)$-cable of $T$ with surgery coefficient one less than the framing determined by the cable torus. Consider the torus $S^\pm_2$ inside $(S^3,\xi_{std})$ from Theorem~\ref{nonthickenable}. Recall that the dividing curves on $\partial S^\pm_2$ have slope $1/2$. As discussed in Section~\ref{classtori}, we know that inside of $S^\pm_2$ there is a convex torus $T_\pm$ parallel to the boundary that has dividing slope $2/5$. The contact structure on $S^\pm_2$ is given by a path in the Farey graph that starts at $\infty$ goes to $0$, then $1/3$, then $2/5$ and finally $1/2$. All the edges (except the first which has no sign) have a $\pm$ sign. Now according to Lemma~\ref{contactcable} we see that performing Legendrian surgery on a Legendrian divide on $T_\pm$ will result in the solid torus $S_{9/25}$ with boundary slope $1/2$ and the contact structure is described by a path $P_\pm$ from $9/25$ to $1/2$ with all edges (except the first which has no sign) having a $\pm$ sign. That is the contact structure on $S^3_T(9/25)$ is obtained by gluing $(S^3_T, \xi_2)$ to $S_{9/25}$ with the contact structure given by $P_\pm$. Namely, when we have a $+$ sign we get $\xi^2_{0,P_+}$ and when we have a negative sign we get $\xi^2_{1,P_-}$. In particular we see that these two contact structures at the top of the triangle are Stein fillable. 

\begin{figure}[htb]

{\small
\begin{overpic}
{figs/925}
\put(13, -1){$\frac 1{3}$}
\put(32, -1){$\frac 5{14}$}
\put(43, -1){$\frac 9{25}$}
\put(54, -1){$\frac 4{11}$}
\put(97, -1){$\frac 3{8}$}
\put(180, -1){$\frac 2{5}$}
\put(346, -1){$\frac 1{2}$}
\end{overpic}}
\caption{The path in the Farey graph from $\frac{9}{25}$ to $\frac 12$.}
\label{925}
\end{figure}
Now let $P$ be any signed path from $9/25$ to $1/2$ determining a contact structure on $S_{9/25}$ that has both signs. Figure~\ref{925} shows the path form $9/25$ to $1/2$.
Notice that the edges with a sign are all in a continued fraction block. So if both signs are present, we can assume we have a mixed torus with dividing slope $3/8$ and the exceptional slopes are $\{1/3\}$. Thus, if the contact structure $\xi^2_{0,P}$ is strongly fillable, then so is $\xi^2_0$ on $-\Sigma(2,3, 17)=S^3_T(1/3)$, but Ghiggini's result \cite{Ghi05}, see \cite{GV16}, says this is not the case. Thus, none of these contact structures are Stein fillable. This completes the theorem in the case that $r=9/25$.  

We now consider $r\in(9/25,4/11)$. Notice that in the contact structures $\xi^2_{0,P_+}$ and  $\xi^2_{1,P_-}$ on $S^3_T(9/25)$, there is a torus $S_{9/25}$ with dividing slope $4/11$. Since there is an edge between $9/25$ and $4/11$, this is the neighborhood of a Legendrian knot  and any contact surgery with slope in $(9/25, 4/11)$ can be done via Legendrian surgery on a link in the solid torus. Thus, all the contact structures $\xi^2_{0,P}$ where the last three edges in $P$ have a $+$ sign or all have a $-$ are Stein fillable. The argument at then end of the previous paragraph shows that all the other contact structures $\xi^2_{0,P}$ are not Stein or exactly fillable. 
\end{proof}

We now turn to the proof of Theorem~\ref{1329} that says for 
\[
r\in \left[\frac{13}{49}, \frac{4}{15}\right)
\]
the manifold $S^3_{T}(r)$ has $6\Phi(r)$ tight contact structures of which $(5\Phi(r)+2)$ are Stein fillable and $(\Phi(r)-2)$ are strongly symplectically fillable, but not exactly symplectically fillable. 
\begin{proof}[Proof of Theorem~\ref{1329}]
Notice that such an $r$ is in $(1/4,1/3)$ and so according to Theorem~\ref{class} there are $6\Phi(r)$ contact structures and they organized in a triangle with base and hight three.  From Lemma~\ref{firstrowStein}, we know that the base of the triangle is Stein fillable. 

We now consider the case when $r=\frac{13}{49}$. For the top of the triangle we note that the path in the Farey graph from $13/49$ to $1/3$ is shown in Figure~\ref{1349}. 
\begin{figure}[htb]

{\small
\begin{overpic}
{figs/1349}
\put(17, -3){$\frac 1{4}$}
\put(66, -3){$\frac 5{19}$}
\put(124, -3){$\frac 9{34}$}
\put(154, -3){$\frac {13}{49}$}
\put(181, -3){$\frac 4{15}$}
\put(234, -3){$\frac 3{11}$}
\put(287, -3){$\frac 2{7}$}
\put(340, -3){$\frac 1{3}$}
\end{overpic}}
\caption{The path in the Farey graph from $\frac{13}{49}$ to $\frac 13$. Some of the distances are not to scale.} 
\label{1349}
\end{figure}
In particular there will be edges from $13/49$ to $4/15$, from $4/15$ to $3/11$, from $3/11$ to $2/7$, and from $2/7$ to $1/3$, and the last three are in a continued fraction block. Thus, noting that $13/49$ surgery on $T$ is the same as surgery on the $(7,2)$-cable of $T$ with surgery coefficient one less than the cable torus framing, we see that the argument in the proof of Theorem~\ref{3n1cable} shows that there are $2$ Stein fillable contact structures at the top of the triangle and the rest are not Stein fillable. Thus, we are left to show that the contact structures in the second row of the triangle are Stein fillable, but this follows the same argument as the one given in the sixth paragraph of the proof of Theorem~\ref{main1}. 

The case of $r\in (\frac{13}{49}, \frac{4}{15})$ can be dealt with as we did in the last paragraph of the proof of Theorem~\ref{3n1cable}
\end{proof}

%

\end{document}